\newcommand{\subparagraph}{}
\newtheorem{theorem}{Theorem}
\newtheorem{lemma}{Lemma}
\newtheorem{remark}{Remark}
\newtheorem{assume}{Assumption}
\newcommand{\R}{\mathbb R}
\newcommand{\N}{\mathcal{N}}
\newcommand{\V}{\mathcal{V}}
\newcommand{\A}{\mathcal{A}}
\newcommand{\eps}{\epsilon}
\newcommand{\Z}{\mathbb{Z}}
\newcommand{\bmx}[1]{\begin{bmatrix}#1\end{bmatrix}} 
\newcommand{\bkt}[1]{\left[#1\right]} 
\newcommand{\pth}[1]{\left(#1\right)} 
\newcommand{\brc}[1]{\left \{#1\right \}} 
\newcommand{\nrm}[1]{\left \lVert#1\right \rVert} 
\newcommand{\bmxs}[1]{\begin{bsmallmatrix}#1\end{bsmallmatrix}}
\DeclarePairedDelimiter{\ceil}{\lceil}{\rceil}
\DeclarePairedDelimiter{\floor}{\lfloor}{\rfloor}
\DeclarePairedDelimiter{\abs}{\lvert}{\rvert}
\newcommand{\rarr}{\rightarrow} 
\let\oldceil\ceil
\def\ceil{\@ifstar{\oldceil}{\oldceil*}}
\let\oldfloor\floor
\def\floor{\@ifstar{\oldfloor}{\oldfloor*}}
\let\oldnorm\norm
\def\norm{\@ifstar{\oldnorm}{\oldnorm*}}
\let\oldabs\abs
\def\abs{\@ifstar{\oldabs}{\oldabs*}}
\newcommand{\LSE}{\textup{LSE}}
\newcommand{\overbar}[1]{\mkern 1.5mu\overline{\mkern-1.5mu#1\mkern-1.5mu}\mkern 1.5mu}
\newcommand{\figsize}{0.32}
\newtheorem{problem}{Problem}
\let\NAT@parse\undefined
\title{\LARGE \bf Adversarial Resilience for Sampled-Data Systems under High-Relative-Degree Safety Constraints
}
\author{James Usevitch and Dimitra Panagou
\thanks{*The authors wish to acknowledge the technical and financial support of the Automotive Research Center (ARC) in accordance with Cooperative Agreement W56HZV-19-2-0001 U.S. Army CCDC Ground Vehicle Systems Center (GVSC) Warren, MI.}
\thanks{James Usevitch, and Dimitra Panagou are with the Aerospace Engineering Department at the University of Michigan, Ann Arbor.
        {\tt\small \{usevitch, dpanagou\}@umich.edu}}%
}
\begin{document}

\maketitle
\thispagestyle{empty}
\pagestyle{empty}

\begin{abstract}

Control barrier functions (CBFs) have recently become a powerful method for rendering desired safe sets forward invariant in single- and multi-agent systems. In the multi-agent case, prior literature has considered scenarios where all agents cooperate to ensure that the corresponding set remains invariant. 
However, these works do not consider scenarios where a subset of the agents are behaving adversarially with the intent to violate safety bounds. In addition, prior results on multi-agent CBFs typically assume that control inputs are continuous and do not consider sampled-data dynamics.
This paper presents a framework for normally-behaving agents in a multi-agent system with heterogeneous control-affine, sampled-data dynamics to render a safe set forward invariant in the presence of adversarial agents.
The proposed approach considers several aspects of practical control systems including input constraints, clock asynchrony and disturbances, and distributed calculation of control inputs. Our approach also considers functions describing safe sets having high relative degree with respect to system dynamics.
The efficacy of these results are demonstrated through simulations.
\end{abstract}

\section{Introduction}

Guaranteeing the safety of autonomous systems is a critical challenge in modern control theory. 
Safety is frequently modeled by defining a safe subset of the state space for a given system and generating control inputs that render this subset forward invariant. 
Control barrier function (CBF) methods \cite{ames2019control, srinivasan2018control, glotfelter2017nonsmooth, garg2019control} that leverage quadratic programming (QP) techniques have risen as a powerful framework for establishing forward invariance of a safe set. Both single-agent \cite{ames2016control, hsu2015control, xiao2019control, cortez2019control} and multi-agent systems \cite{li2018formally, wang2017safety, glotfelter2017nonsmooth, glotfelter2018boolean, guerrero2019realization} have been considered, where agents have control-affine dynamics. Multi-agent CBF techniques have been applied to a variety of settings including collision avoidance for quadrotors \cite{wang2017safe} and mobile robots \cite{pickem2017robotarium}, accomplishing spatiotemporal tasks \cite{lindemann2019control}, 
forming or maintaining network communication topologies between mobile agents \cite{guerrero2019realization}, and more.

Prior work on multi-agent CBF methods typically assumes that all agents apply the nominally specified control law. This assumption does not encompass faulty or adversarial behavior of agents within the system. In particular, adversarial agents may apply control laws specifically crafted in an attempt to violate set invariance conditions within given control constraints.
Much prior and recent work has considered the accomplishment of control objectives in the presence of faulty or adversarial agents \cite{mitchell2005time, isaacs1999differential, park2017fault, saulnier2017resilient, usevitch2018finite, usevitch2019resilient, usevitch2018resilient}. However, to the authors' best knowledge no prior work using CBF methods have considered the presence of adversarial agents with respect to control actions. CBFs are used in \cite{guerrero2019realization} to construct resilient network communication topologies in finite time; however, all agents are assumed to apply the nominal CBF-based controller without any adversarial misbehavior with respect to control actions.

In addition, the majority of prior work involving CBF methods considers a continuous-time system with continuous inputs. Practical systems are often more appropriately modeled using sampled-data dynamics, where state measurements and control inputs remain constant between sampling times. Notable studies that have explicitly considered the effects of sampling in CBF methods include \cite{cortez2019control, singletary2020control}. However, these papers do not consider multi-agent systems and do not consider the presence of faulty or adversarial agents.
Many systems also consider a CBF having high relative degree with respect to agents' dynamics, where the control input of the agents does not appear in the expression for the first derivative of the function whose sublevel or superlevel sets describe the safe set (e.g., systems with double-integrator dynamics). Methods to apply CBF set-invariance methods to such systems have been presented in prior literature \cite{nguyen2016exponential,xiao2019control}; however these methods do not consider sampled-data dynamics and do not consider the presence of adversarial agents.

In this paper, we present a framework for guaranteeing forward invariance of sets in sampled-data multi-agent systems in the presence of adversarial agents. This framework considers a class of functions describing safe sets that have high relative degree with respect to (w.r.t.) the system dynamics, where the control inputs of the agents do not appear for one or more time derivatives of the safe-set function. Unlike prior work, this paper simultaneously considers multi-agent systems, asynchronous sampling times with clock disturbances, the presence of adversarially behaving agents and functions describing safe sets that have high relative degree w.r.t. the system dynamics.
Our specific contributions are as follows:
\begin{itemize}
    \item We present a method under which a set of normally-behaving agents in a system with sampled-data dynamics can collaboratively render a safe set forward invariant despite the actions of adversarial agents. Our analysis considers asychronous sampling times and distributed calculation of agents' control inputs.
    \item We present a method under which a system of normally-behaving agents with sampled-data dynamics can render a safe set forward invariant in the presence of adversarial agents when the safe set is described by a function with high relative degree with respect to agents' dynamics.
\end{itemize}

Part of this work was previously submitted as a conference paper \cite{usevitch2021adversarially}. The differences between the conference version and this work are as follows:
\begin{itemize}
    \item We include several proofs which were omitted from the conference version due to space constraints.
    \item We extend the results of the conference version \cite{usevitch2021adversarially} to consider functions describing safe sets having high relative degree with respect to the system dynamics.
    \item We present additional simulations to demonstrate the efficacy of our approach.
\end{itemize}

The organization of this paper is as follows: Section \ref{sec:notation} gives the notation and problem formulation, 
Section \ref{sec:mainresults} presents the main results for systems with a relative degree of one are presented, Section \ref{sec:highdegree} presents the main results for functions describing the safe set having high relative degree w.r.t. the system dynamics, Section \ref{sec:simulations} presents simulations demonstrating this paper's results, and Section \ref{sec:conclusion} gives a brief conclusion.


\section{Notation and Problem Formulation}
\label{sec:notation}

The nonnegative and strictly positive integers are denoted $\mathbb{Z}_{\geq 0}$ and $\mathbb{Z}_{>0}$, respectively.
We use the notation $h\in \mathcal C^{1,1}_{loc}$ to denote a continuously differentiable function $h$ whose gradient $\nabla h$ is locally Lipschitz continuous.
Let $x_i \in \R^{n_i}$, $n_i \in \Z_{\geq 1}$ for $i=1,\ldots,N$ be a set of vectors, and let $\bar{n} = \sum_{i=1}^N n_i$. We let $\vec{x} = \bmx{x_1^T,\ldots,x_N^T}^T$ denote the vector concatenating all $x_i$ vectors.
The partial Lie derivative of a function $f(\vec{x})$ with respect to $x_i$ is denoted $L_{f} h^{x_i}(\vec{x}) = \frac{\partial h(\vec{x})}{\partial x_i} f(\vec{x})$.
The $n$-ary Cartesian product of sets $S_1, \ldots, S_N$ is denoted $\bigtimes_{i=1}^N S_i = S_1 \times \ldots \times S_N$.
The Minkowski sum of sets $S_1$, $S_2$ is denoted $S_1 \oplus S_2$. The open and closed norm balls of radius $\eps > 0$ centered at $\vec{x} \in \R^n$ are respectively denoted $B(\vec{x}, \eps)$, $\overbar{B}(\vec{x}, \eps)$. The boundary and interior of a set $S \subset \R^n$ are denoted $\partial S$ and $\textup{int}(S)$, respectively.

\subsection{Problem Formulation}
\label{sec:problem}

Consider a group of $N \in \Z_{>0}$ agents, with the set of agents denoted by $\V$ and each agent indexed $\{1,\ldots,N\}$.
Each agent $i \in \V$ has the state $x_i \in \R^{n_i}$, $n_i \in \Z_{>0}$ and input $u_i \in \R^{m_i}$, $m_i \in \Z_{>0}$.
The system and input vectors $\vec{x}, \vec{u}$, respectively, denote the vectors that concatenate all agents' states and inputs, respectively, as $\vec{x} = \bmx{x_1^T,\ldots, x_N^T}^T,\ \vec{x} \in \R^{\bar{n}}$ and $	\vec{u} = \bmx{u_1^T,\ldots, u_N^T},\ \vec{u} \in \R^{\bar{m}}$, $\bar{n} = \sum_{i=1}^N n_i,\ \bar{m} = \sum_{i=1}^N m_i$.
Agents receive knowledge of the system state $\vec{x}$ in a sampled-data fashion; i.e., each agent $i \in \V$ has knowledge of $\vec{x}(\cdot)$ only at times
$\mathcal{T}_i = \{t_i^0, t_i^1, t_i^2, \ldots\}$,
where $t_i^k$ represents agent $i$'s $k$th sampling time, with $t_i^{k+1} > t_i^{k}$ $\forall k \in \Z_{\geq 0}$.
In addition, at each $t_i^k \in \mathcal{T}_i$ the agent $i$ applies a zero-order hold (ZOH) control input $u(t_i^k)$ that is constant on the time interval $t \in [t_i^k, t_i^{k+1})$.
For brevity, we denote $x_i^{k_i} = x_i(t_i^k)$ and $u_i^{k_i} = u_i(t_i^k)$.
The sampled-data dynamics of each agent $i \in \V$ under its ZOH controller on each interval $t \in [t_i^{k}, t_i^{k+1})$ is as follows:
\begin{align}
\begin{aligned}
\label{eq:generalsystem}
	\dot{x}_i(t) &= f_i(x_i(t)) + g_i(x_i(t)) u_i(t_i^k) + \phi_i(t). \\
\end{aligned}
\end{align}
The functions $f_i$, $g_i$ may differ among agents, but are all locally Lipschitz on their respective domains $\R^{n_i}$.
Note that under these definitions for any $i \in \V$ there exists a matrix $C_i \in \R^{n_i} \times \R^{\bar{n}}$ such that $x_i = C_i \vec{x}$. We abuse notation by sometimes writing an expression $f(x_i)$ as $f(\vec{x})$.
The functions $\phi_i: \R \rarr \R^{n_i}$, $i \in \V$,
are locally Lipschitz in $t$ and
model disturbances to the system \eqref{eq:generalsystem}. 
Each $\phi_i$ is bounded as per the following assumption:
\begin{assume}
For all $i \in \V$, the disturbances $\phi_i(t)$ satisfy $\nrm{\phi_i(t)} \leq \phi_{i}^{\max} \in \R_{\geq 0},\ \forall t \geq 0$.
\end{assume}
Since each control input $u_i(\cdot)$ is piecewise constant, the existence and uniqueness of solutions to \eqref{eq:generalsystem} are guaranteed by Carath\'eodory's theorem \cite[Sec. 2.2]{grune2017nonlinear}.

Each agent $i \in \V$ has control input constraints that
are represented by a nonempty, convex, compact polytope, i.e. $u_i \in \mathcal{U}_i(x_i) = \{u \in \R^{m_i} : A_i(x_i) u \leq b_i(x_i)\}$, where the functions $A_i :\R^{n_i} \rarr \R^{q_i \times m_i}$, $ b_i: \R^{n_i} \rarr \R^{q_i}$ are locally Lipschitz on their respective domains. Representation of control input constraints as polytopes is common in prior literature \cite{ames2016control, garg2019control, garg2019prescribed}.
Similar to prior work, it is assumed there exists a nominal control law $\vec{u}_{\textup{nom}}(\cdot)$ that the system computes in order to accomplish some objective \cite{ames2019control}. Examples of such a $\vec{u}_{\textup{nom}}$ might include a feedback control law to track a time-varying trajectory or to converge to a goal set. The nominal control law is designed without any safety consideration, and therefore it is desired to minimally modify $\vec{u}_\textup{nom}$ in order to render a safe set $S \subset \R^{\bar{n}}$ forward invariant under the dynamics \eqref{eq:generalsystem}. The set $S$ is defined as the sublevel sets of a function $h : \R^{\bar{n}} \rarr \R$, $h \in C^{1,1}_{loc}$ as follows:
\begin{align}
    \begin{aligned}
\label{eq:setdefinition}
    S = \{\vec{x} \in \R^{\bar{n}} : h(\vec{x}) \leq 0\}, \\
    \partial S = \{\vec{x} \in \R^{\bar{n}} : h(\vec{x}) = 0\}, \\
    \text{int}(S) = \{\vec{x} \in \R^{\bar{n}} : h(\vec{x}) < 0\}.
\end{aligned}
\end{align}
\begin{assume}
\label{assume:Scompact}
The set $S$ is compact.
\end{assume}
\begin{assume}
\label{assume:Ui}
For all $i \in \V$ and $\forall \vec{x} \in S$, the interior of $\mathcal{U}_i(\vec{x})$ is nonempty and $\mathcal{U}_i(\vec{x})$ is uniformly compact near $\vec{x}$. 
\end{assume}
\begin{remark}
Note that the conditions for Assumption \ref{assume:Ui} are trivially satisfied when $A_i$, $b_i$ are constant and the interior set $\{u \in \R^{m_i} : A_i u < b_i \}$ is nonempty.
For a specific example satisfying Assumption \ref{assume:Ui} when $\mathcal{U}_i(\cdot)$ is not constant, see \eqref{eq:IOcontrolbounds} in Section \ref{sec:simulations} of this paper.
\end{remark}

We will refer to functions describing safe sets as simply ``safe set functions" for brevity.
For multi-agent systems that apply continuous controllers $u_i(t)$ to the dynamics \eqref{eq:generalsystem}, forward invariance can be collaboratively guaranteed by satisfying the sufficient condition $\dot{h}(\vec{x}(t)) \leq -\alpha(h(\vec{x}(t)))$ based on Nagumo's theorem \cite{nagumo1942lage}, where $\alpha(\cdot)$ is an extended class-$\mathcal{K}_\infty$ function and locally Lipschitz on $\R$. The dependence of $\vec{x}(t)$ on $t$ will be omitted for brevity. For the multi-agent system \eqref{eq:generalsystem}, expanding the term $\dot{h}(\vec{x})$ yields
\begin{align}
\label{eq:Nagumosum}
    \sum_{i \in \V} \pth{L_{f_i} h^{x_i}(\vec{x}) + L_{g_i} h^{x_i}(\vec{x}) u_i + L_{\phi_i} h^{x_i}(\vec{x})} \leq -\alpha(h(\vec{x})),
\end{align}
where the partial Lie derivative notation $L_{f_i} h^{x_i}(\vec{x})$ is defined at the beginning of Section \ref{sec:notation}.
When all agents behave normally, methods exist for agents to locally solve for appropriate local control inputs that together satisfy the condition in \eqref{eq:Nagumosum} (e.g. \cite{lindemann2019decentralized}).

In contrast to prior work, this paper considers systems containing agents that exhibit adversarial behavior. 
More specifically, this paper considers a subset of agents $\A \subset \V$ that apply the following control input for all sampling times $t_j^k$, $k \in \Z_{\geq 0}$, $j \in \A$:
\begin{align}
\label{eq:adversarialu}
    \hspace{-4pt}u_j^{\max}(\vec{x}^{k_j}) = \arg\max_{u \in \mathcal{U}_j} \bkt{L_{f_j}h^{x_j}(\vec{x}^{k_j}) + L_{g_j} h^{x_j}(\vec{x}^{k_j}) u}.
\end{align}
The agents in $\A$ are called \emph{adversarial}.
\begin{remark}
The control input \eqref{eq:adversarialu} models adversarial intent in the sense that \eqref{eq:adversarialu} maximizes agent $j$'s control input contribution to the left-hand side (LHS) of \eqref{eq:Nagumosum}, i.e., the term $L_{g_j}h^{x_j}(\vec{x})u_j$. Violating the inequality in \eqref{eq:Nagumosum} removes the forward invariance guarantee for the safe set $S$, and therefore the control law \eqref{eq:adversarialu} represents an adversarial agent's maximum instantaneous control effort towards violating system safety.
\end{remark}
Agents that are not adversarial are called \emph{normal}. The set of normal agents is denoted $\N = \V \backslash \A$.
Dividing the left-hand side (LHS) of \eqref{eq:Nagumosum} into normal and adversarial parts yields the following sufficient condition for set invariance in the presence of adversaries:
\begin{align}
    &\sum_{j \in \mathcal{A}} \pth{L_{f_j} h^{x_j}(\vec{x}) + L_{g_j} h^{x_j}(\vec{x}) u_j^{\max} + L_{\phi_j} h^{x_j}(\vec{x})} + \label{eq:Nagumo_adversarial}\\
    &\sum_{i \in \mathcal{N}} \pth{L_{f_i} h^{x_i}(\vec{x}) + L_{g_i} h^{x_i}(\vec{x}) u_i + L_{\phi_i} h^{x_i}(\vec{x})} \leq -\alpha(h(\vec{x})). \nonumber
\end{align}
Again, the equation \eqref{eq:Nagumo_adversarial} being satisfied for all $t \geq 0$ is equivalent to $\dot{h}(\vec{x}(t)) \leq \alpha(h(\vec{x}(t)))$ being satisfied for all $t \geq 0$ which implies forward invariance of the set $S$.
The form of \eqref{eq:Nagumo_adversarial} reflects sampled-data adversarial agents seeking to violate the set invariance condition in \eqref{eq:Nagumosum} by maximizing their individual contributions to the LHS sum.
The problem considered in this paper is for the normal agents to compute control inputs that render the set $S$ forward invariant using the sufficient condition in \eqref{eq:Nagumo_adversarial} despite the worst-case behavior of the adversarial agents in $\A$.

\begin{problem}
Determine control inputs for the normal agents $i \in \V$ which render the set $S$ forward invariant under the perturbed sampled-data dynamics \eqref{eq:generalsystem} in the presence of a set of worst-case adversarial agents $\A$.
\end{problem}

\begin{remark}
\label{remark:uncontrollable}
Since adversarial agents' states are generally modeled as being uncontrollable under the nominal system control law, 
the function $h(\vec{x})$ can be defined to consider only the safety of normal agents.
\end{remark}

\begin{remark}
This paper assumes the identities of the adversarial agents are known to the normal agents. Methods for identifying misbehavior are beyond the scope of this paper.
\end{remark}

\section{Safe Set Functions with Relative Degree 1}
\label{sec:mainresults}

We first present results for safe set functions $h$ where the control inputs $u_i$ for all agents appear simultaneously in the expression for the first time derivative $\dot{h}(\vec{x}(t))$. Such functions are said to have relative degree 1 with respect to the system dynamics \eqref{eq:generalsystem}.

\subsection{Preliminaries}
The results of this subsection will be needed for our later analysis.
The minimum and maximum value functions $\gamma_i^{\min}(\cdot)$, $\gamma_i^{\max}(\cdot)$ for $i \in \V$ are defined as follows:
\begin{align}
\begin{aligned}
    \gamma_i^{\min}(\vec{x}) &= \min_{u_i \in \mathcal{U}_i} \bkt{L_{f_i}h^{x_i}(\vec{x}) + L_{g_i}h^{x_i}(\vec{x}) u_i}, \\
    \gamma_i^{\max}(\vec{x}) &= \max_{u_i \in \mathcal{U}_i} \bkt{L_{f_i}h^{x_i}(\vec{x}) + L_{g_i}h^{x_i}(\vec{x}) u_i}. \label{eq:vLPs}
    \end{aligned}
\end{align}
Each $\gamma_i^{\min}(\vec{x})$ and $\gamma_i^{\max}(\vec{x})$ can be calculated by solving a parametric linear program
\begin{alignat}{3}
\begin{aligned}
\label{eq:LPvminmax}
		&\underset{u_i \in \R^{m_i}}{\min} & & c(\vec{x})^T u_i & \text{s.t.} & &   A_i(\vec{x}) u_i \leq b_i(\vec{x}),\\
\end{aligned}
\end{alignat}
where the vector $c(\vec{x})^T = L_{g_i}h^{x_i}(\vec{x})$ when calculating 
$\gamma_i^{\min}$ and $c(\vec{x})^T = -L_{g_i}h^{x_i}(\vec{x})$ when calculating $\gamma_i^{\max}$.
Note that \eqref{eq:LPvminmax} is feasible for all $\vec{x} \in S$ under Assumption \ref{assume:Ui}. 
For an adversarial $j \in \A$, the function $\gamma_j^{\max}(\cdot)$ represents the bound on the worst-case contribution of $j$ to the sum on the LHS of \eqref{eq:Nagumo_adversarial}. Similarly, the function $\gamma_i^{\min}(\cdot)$ for a normal agent $i \in \N$ represents the bound on agent $i$'s best control effort towards minimizing the LHS of \eqref{eq:Nagumo_adversarial}.

\begin{remark}
\label{rem:gammaisdabomb}
Note that for any $j \in \A$, for all $u_j \in \mathcal{U}_j$ it holds that
\begin{align}
    L_{f_j}h^{x_j}(\vec{x}) + L_{g_j} h^{x_j}(\vec{x}) u_j \leq \gamma_j^{\max}(\vec{x}),\quad \forall \vec{x} \in \R^{\bar{n}}.
\end{align}
Due to this property, it will be demonstrated later in this paper that the results obtained by considering $\gamma_i^{\max}$ will hold for any $u_j \in \mathcal{U}_j$ for all $j \in \A$.
\end{remark}

The following result presents a sufficient condition under which $\gamma_i^{\min}(\cdot)$ and $\gamma_i^{\max}(\cdot)$ are locally Lipschitz on the set $S$.
\begin{lemma}
\label{lem:vLips}
If the interior of $\mathcal{U}_i(\vec{x})$ is nonempty for all $\vec{x} \in S$ and $\mathcal{U}_i(\vec{x})$ is uniformly compact near $\vec{x}$ for all $\vec{x} \in S$, then the functions $\gamma_i^{\min}(\cdot)$ and $\gamma_i^{\max}(\cdot)$ defined by \eqref{eq:vLPs} are locally Lipschitz on $S$.
\end{lemma}

\begin{proof}
The proofs for $\gamma_i^{\min}(\cdot)$ and $\gamma_i^{\max}(\cdot)$ are identical except for trivially changing the sign of the objective function; therefore only the proof for $\gamma_i^{\min}(\cdot)$ is given.
Define the set of optimal points
\begin{align*}
P_i(\vec{x}) = \brc{u_i^* : u_i^* = \arg \min_{u \in \mathcal{U}_i} L_{f_i}h^{x_i}(\vec{x}) + L_{g_i}h^{x_i}(\vec{x})u }.
\end{align*}
The result in \cite[Theorem 5.1]{gauvin1982differential} states that if $\mathcal{U}_i(\vec{x})$ is nonempty and uniformly compact near $\vec{x} \in \R^{\bar{n}}$ and if the Mangasarian-Fromovitz (M-F) conditions 
hold at each $u_i^* \in P_i(\vec{x})$, then 
$\gamma_i^{\min}(\cdot)$ is locally Lipschitz near $\vec{x}$ (see \cite{gauvin1982differential} for the definition of the M-F conditions). The first two conditions hold by assumption, and so we next prove that the M-F conditions hold at each $u_i^* \in P(\vec{x})$. Let $A_{i,j}(\cdot)$ denote the $j$th row of $A_i(\cdot)$ and $b_{i,j}(\cdot)$ denote the $j$th entry of $b_i(\cdot)$.

Consider any $\vec{x} \in S$ and $u_i^* \in P_i(\vec{x})$. Denote $J_i(\vec{x}) = \brc{j \in \{1,\ldots, q_i\} : A_{i,j}(\vec{x}) u_i^* - b_{i,j}(\vec{x}) = 0}$ as the set of constraint indices where equality holds at $u_i^*$.
Note that by definition of $J_i(\vec{x})$, for all $j' \not\in J_i(\vec{x})$ it holds that $A_{i,j'}(\vec{x}) < 0$.
The interior $\textup{int}\pth{\mathcal{U}_i(\vec{x})}$ being nonempty and convex implies there exists an $r \in \R^{m_i}$ such that for all $j \in J_i(\vec{x})$,
\begin{align}
\label{eq:Aijbijless}
    &A_{i,j}(\vec{x})\pth{u_i^* + r} - b_{i,j}(\vec{x}) < 0, \nonumber\\
    &\implies A_{i,j}(\vec{x})r < b_{i,j}(\vec{x}) - A_{i,j}(\vec{x})u_i^* = 0.
\end{align}

This implies that there exists an $r$ such that $A_i(\vec{x})r < 0$. The point $u_i^*$ is therefore M-F regular. Since this holds for any $u_i^* \in P_i(\vec{x})$ and $\forall \vec{x} \in S$, by \cite[Theorem 5.1]{gauvin1982differential} it holds that $\gamma_i^{\min}(\cdot)$ is locally Lipschitz on $S$.
\end{proof}
We briefly emphasize the difference between the min / max \textbf{\emph{value}} functions $\gamma_i^{\min}, \gamma_i^{\max}$ in \eqref{eq:vLPs} and the min / max \textbf{\emph{point}} functions defined as
\begin{align}
    u_i^{\min}(\vec{x}) &= \underset{u_i \in \mathcal{U}_i}{\arg\min} \bkt{L_{f_i}h^{x_i}(\vec{x}) + L_{g_i}h^{x_i}(\vec{x}) u_i}, \label{eq:umin}\\
    u_i^{\max}(\vec{x}) &= \underset{u_i \in \mathcal{U}_i}{\arg\max} \bkt{L_{f_i}h^{x_i}(\vec{x}) + L_{g_i}h^{x_i}(\vec{x}) u_i}. \label{eq:umax}
\end{align}
In words, $u_i^{\min}$ and $u_i^{\max}$ represent the control actions 
such that, respectively, $\gamma_i^{\min}(\vec{x}) = L_{f_i}h^{x_i}(\vec{x}) + L_{g_i}h^{x_i}(\vec{x}) u_i^{\min}$ and  $\gamma_i^{\max}(\vec{x}) = L_{f_i}h^{x_i}(\vec{x}) + L_{g_i}h^{x_i}(\vec{x}) u_i^{\max}$.
Although the min / max \textbf{\emph{value}} functions $\gamma_i^{\min}(\cdot), \gamma_i^{\max}(\cdot)$ are locally Lipschitz under the conditions of Lemma \ref{lem:vLips} and \cite{gauvin1982differential}, the min / max \textbf{\emph{point}} functions $u_i^{\min}$ and $u_i^{\max}$ may not be locally Lipschitz in general.\footnote{We re-emphasize however that when \eqref{eq:umax} is applied in a ZOH manner, existence and uniqueness of solutions to \eqref{eq:generalsystem} is guaranteed  by Carath\'eodory's theorem \cite[Sec. 2.2]{grune2017nonlinear}.}

The following Lemma will also be needed for our later analysis, and is based on results in \cite{cortez2019control}, \cite[Thm. 3.4]{khalil2002nonlinear}. It establishes an upper bound on the difference between the sampled state $\vec{x}^{k_i}$ and the state $\vec{x}(t)$ on the time interval $t \in [t_i^k, t_i^{k} + \Gamma)$, $\Gamma \geq 0$.
\begin{lemma}
\label{lem:xdiffbound}
For any $\Gamma \geq 0$, there exists a $\mu \geq 0$, $L' > 0$ such that the following holds:
\begin{align*}
    \nrm{\vec{x}(t) - \vec{x}^{k_i}} \leq \frac{\mu}{L'}\pth{e^{L'\Gamma} -1}\ \forall t \in [t_i^k, t_i^k + \Gamma).
\end{align*}
\end{lemma}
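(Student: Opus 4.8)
The plan is to treat this as a standard Gr\"onwall / comparison-lemma estimate on the state deviation of a sampled-data system across a single inter-sample interval, on which the input is frozen at $\vec{u}^k$ and the state is driven only by the (bounded, locally Lipschitz) drift, control, and disturbance terms. First I would stack the per-agent dynamics \eqref{eq:generalsystem} into a single closed-loop vector field $F(\vec{x}, t)$ whose $i$th block is $f_i(x_i) + g_i(x_i) u_i^k + \phi_i(t)$; over $t \in [kT,(k+1)T)$ the input $u_i^k$ is a fixed element of the compact polytope $\mathcal{U}_i$, so the only time dependence enters through $\phi_i$. Since existence and uniqueness hold by Carath\'eodory, the solution $\vec{x}(\cdot)$ is absolutely continuous and satisfies $\vec{x}(t) - \vec{x}^k = \int_{kT}^t F(\vec{x}(s),s)\,ds$.

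Next I would confine the trajectory to a compact set $D \subset \R^{\bar{n}}$ over the interval, which is where the two constants originate. Using the compactness of $S$ (Assumption \ref{assume:Scompact}) and of each $\mathcal{U}_i$, the boundedness $\nrm{\phi_i(t)} \le \phi_i^{\max}$, and the short horizon $T$, a fixed compact neighborhood of $S$ contains $\vec{x}(\cdot)$ on $[kT,(k+1)T)$. On $D$ the local Lipschitzness of $f_i$ and $g_i$ upgrades to a uniform Lipschitz constant, so $F(\cdot,t)$ is Lipschitz in its first argument with some $L' > 0$ uniform in $t$ (the frozen input and $\phi_i(t)$ do not enlarge the $\vec{x}$-Lipschitz constant), and $\mu := \sup_{t}\nrm{F(\vec{x}^k,t)} < \infty$, with $\phi_i^{\max}$ and compactness of the $\mathcal{U}_i$ furnishing the finite bound.

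With these constants in hand, I would estimate the deviation directly. Writing $e(t) = \nrm{\vec{x}(t) - \vec{x}^k}$ and combining the triangle inequality with the Lipschitz bound,
\begin{align*}
\dot{e}(t) &\le \nrm{F(\vec{x}(t),t)} \\
&\le \nrm{F(\vec{x}(t),t) - F(\vec{x}^k,t)} + \nrm{F(\vec{x}^k,t)} \\
&\le L'\, e(t) + \mu,
\end{align*}
where $\dot{e}$ is read as an upper Dini derivative to accommodate the mere absolute continuity of $\vec{x}(\cdot)$ (using $\tfrac{d}{dt}\nrm{\vec{x}(t)-\vec{x}^k} \le \nrm{\dot{\vec{x}}(t)}$). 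Applying the comparison lemma to the scalar initial value problem $\dot{\zeta} = L'\zeta + \mu$, $\zeta(kT) = 0$, whose solution is $\zeta(t) = \frac{\mu}{L'}\pth{e^{L'(t-kT)} - 1}$, and noting that this is increasing in the elapsed time $t - kT \le T$, yields $e(t) \le \frac{\mu}{L'}\pth{e^{L'T} - 1}$ for all $t \in [kT,(k+1)T)$, as claimed.

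I expect the main obstacle to be the compact-set step rather than the estimate itself: because $f_i$, $g_i$, and $\phi_i$ are merely locally Lipschitz, the uniform constants $L'$ and $\mu$ exist only on a bounded region, so one must argue --- e.g.\ via a first-exit/continuity argument, or by positing a compact forward-reachable set over horizon $T$ --- that $\vec{x}(\cdot)$ does not leave $D$ during the interval. Once $D$ is fixed, the extraction of the Lipschitz constant and bound and the subsequent Gr\"onwall step are routine.
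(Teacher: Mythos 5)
Your proof is correct in substance and arrives at the same exponential bound, but the constants are instantiated by a genuinely different (if closely related) argument than the paper's. The paper invokes \cite[Thm.~3.4]{khalil2002nonlinear} with the \emph{zero} vector field as the nominal system: the sampled state $\vec{x}^k$ is the constant solution of $\dot{y}=0$, the entire closed-loop dynamics $\sum_{l}\pth{f_l(x_l)+g_l(x_l)u_l+\phi_l(t)}$ is treated as the bounded perturbation, with $\mu$ a supremum of its norm over $S$, and $L'$ is then an \emph{arbitrary} positive constant (any constant is a Lipschitz constant for the zero field). In effect the paper's estimate is just ``maximum speed times elapsed time,'' $\mu(t-kT)\leq \frac{\mu}{L'}\pth{e^{L'T}-1}$, and needs no Lipschitz constant of the actual dynamics at all --- only boundedness of the field on $S$. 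Your version instead takes $L'$ to be the Lipschitz constant of the closed-loop field on a compact set $D$ and $\mu$ to be the field's magnitude at the single point $\vec{x}^k$, then runs Gr\"onwall on $e(t)=\nrm{\vec{x}(t)-\vec{x}^k}$; this buys a potentially tighter $\mu$ (evaluated at one point rather than over all of $S$) at the cost of actually needing the Lipschitz constant. One small slip in your sketch: the frozen input \emph{does} enlarge the $\vec{x}$-Lipschitz constant, since $\nrm{g_i(x)u_i^k - g_i(y)u_i^k}\leq L_{g_i}\nrm{u_i^k}\nrm{x-y}$; this is harmless because the sets $\mathcal{U}_i$ are compact, so $L'$ of the form $c_1 + c_2 u_{\max}$ still works, but the parenthetical claim as stated is wrong. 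Finally, the containment issue you flag at the end is real and is \emph{shared} by the paper: its $\mu$ is a supremum over $S$ only, so the cited comparison theorem applies only while $\vec{x}(t)$ remains in $S$ (or a fixed compact neighborhood), and the published proof does not argue this either; your explicit acknowledgment and proposed first-exit remedy is, if anything, more careful than the original.
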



\begin{proof}

Using the same method as \cite[Thm. 3.4]{khalil2002nonlinear}, define the functions
\begin{align}
    \mathfrak{f}(t,\vec{x}) =& 0, \\
    \mathfrak{g}(t,\vec{x}) =& \bmx{ f_1(x_1) + g_1(x_1) u_1(t) + \phi_1(t) \\ \vdots \\ f_N(x_N ) + g_N(x_N) u_N(t) + \phi_N(t)}
\end{align}
Next, observe that
{
\medmuskip=0mu
\thinmuskip=0mu
\thickmuskip=0mu
\begin{align*}
    \frac{d}{dt} \vec{x}^{k_i}\ &=\ 0\ =\ \mathfrak{f}(t,\vec{x}^{k_i}), \\
    \frac{d}{dt} \vec{x}(t)\  &=\ \mathfrak{f}(t, \vec{x}) + \mathfrak{g}(t,\vec{x}). 
\end{align*}
}
Observe that $S$ is compact by Assumption \ref{assume:Scompact}, each $f_i$, $g_i$ is locally Lipschitz, and each $\phi_i(t)$ is locally Lipschitz with $\nrm{\phi_i(t)} \leq \phi_i^{\max}$. In addition, by Assumption \ref{assume:Ui} there exists an upper bound $u_M \in \R$ such that $\nrm{u_l} \leq u_M$. Therefore there exists $\mu \in \R \geq 0$ such that
\begin{align}
    \sup_{\vec{x} \in S} \nrm{\bmx{ f_1(x_1) + g_1(x_1) u_1 + \phi_1(t) \\ \vdots \\ f_N(x_N) + g_N(x_N) u_N + \phi_N(t)}} \leq \mu.
\end{align}
Note that for $t = t_i^k$ we have $\nrm{\vec{x}(t) - \vec{x}^{k_i}(t)} = 0$.
Therefore by \cite[Thm. 3.4]{khalil2002nonlinear}, it holds that
\begin{align}
    \nrm{\vec{x} - \vec{x}^{k_i}} \leq \frac{\mu}{{L'}}\pth{e^{L'(t-t_i^k)}-1}\ \forall t \in [t_i^k, t_i^{k} + \Gamma),
\end{align}
where $L' \in \R_{>0}$ is any strictly positive constant.
\end{proof}
For brevity, we define the function $\eps : \R \times \R \times \R_{>0} \rarr \R$ as
\begin{align}
\label{eq:epsT}
\eps(\Gamma, \mu, L') = \frac{\mu}{{L'}}\pth{e^{L'\Gamma}-1}.
\end{align}
For fixed $\mu, L'$, we abuse notation by writing $\eps(\Gamma)$ as a function of $\Gamma$ only.
It can be shown that for fixed $\mu, L'$,  $\eps(\cdot)$ is a class-$\mathcal{K}$ function in $\Gamma$.

\subsection{Synchronous Sampling Times}

To facilitate the presentation of the main results, we first consider the case where all agents in the system have synchronous sampling times with a period of $\Gamma > 0$, i.e. $\mathcal{T}_i = \{k\Gamma : k \in \Z_{\geq 0}\}$ $\forall i \in \N$. 
This assumption is later relaxed to consider agents with asynchronous, nonidentical sampling times.
The Cartesian product of the admissible controls for all normal agents is denoted 
$\mathcal{U}_{\N} = \bigtimes_{i \in \N} \mathcal{U}_i$.
Under Assumption \ref{assume:Ui}, each $\mathcal{U}_i(\vec{x})$ being uniformly compact near all $\vec{x} \in S$ implies that $\mathcal{U}_{\N}$ is also uniformly compact near all $\vec{x} \in S$. We will denote $\vec{u}_{\N} \in \mathcal{U}_{\N}$ as the vector containing only normal agents' control inputs; i.e. $\vec{u}_{\N} = \bmx{u_{i_1}^T & \ldots & u_{i_{|\N|}}^T}^T$, $\{i_1,\ldots,i_{|\N|}\} \in \N$.

Our ultimate aim is to demonstrate that for all $t \geq 0$, 
\begin{align}
\label{eq:hdot}
    \dot{h}(\vec{x}(t)) + \alpha(h(\vec{x}(t))) \leq 0.
\end{align}
The dependence of $\vec{x}(t)$ on $t$ will be omitted for brevity.
Prior results typically focus on designing continuous $u(\cdot)$ functions that guarantee \eqref{eq:hdot} is satisfied. Satisfying \eqref{eq:hdot} in sampled-data systems for all intermediate times $t \in [t_k, t_{k+1})$, $k \in \Z_{\geq 0}$ is more challenging since $u(\cdot)$ is constant on each interval $t \in [k\Gamma, (k+1)\Gamma)$.
Inspired by \cite{cortez2019control}, this challenge will be addressed
as follows: given the sampled state $\vec{x}(t^k)$ and the state $\vec{x}(t)$, $t \in [t^k, t^{k+1})$, define the error term
\begin{align*}
    e(t,t^k) = \pth{\dot{h}(\vec{x}) - \dot{h}(\vec{x}^k)} + \pth{\alpha(h(\vec{x})) - \alpha(h(\vec{x}^k))}.
\end{align*}
From the LHS of \eqref{eq:hdot} we obtain
\begin{align*}
    &\dot{h}(\vec{x}) + \alpha(h(\vec{x})) = 
    \dot{h}(\vec{x}^k) + \pth{\dot{h}(\vec{x}) -\dot{h}(\vec{x}^k) } + \\ &\hspace{8em}\alpha(h(\vec{x}^k)) + \pth{\alpha(h(\vec{x})) - \alpha(h(\vec{x}^k))}, \\
    &\hspace{2em} = \dot{h}(\vec{x}^k) + \alpha(h(\vec{x}^k)) + e(t,t^k), \\
    & \hspace{2em}\leq \dot{h}(\vec{x}^k) + \alpha(h(\vec{x}^k)) + \sup_{t \in [t^k, t^{k+1})}\nrm{e(t,t^k)}.
\end{align*}
By defining a function $\eta(\cdot)$ such that $\eta(\Gamma) \geq \sup_{t \in [t^k, t^{k+1})} \nrm{e(t,t^k)}$, the inequality condition in \eqref{eq:hdot} is therefore satisfied for all times on the interval $t \in [t^k, t^{k+1})$ if for every $t^k \in \mathcal{T}$ the following condition holds:
\begin{align}
    &\dot{h}(\vec{x}^k) + \alpha(h(\vec{x}^k)) + \eta(\Gamma) \leq 0. \label{eq:hkdotstuff} 
\end{align}
Satisfaction of \eqref{eq:hkdotstuff} implies that $\dot{h}(\vec{x}) + \alpha(h(\vec{x})) \leq \dot{h}(\vec{x}^k) + \alpha(h(\vec{x}^k)) + \eta(\Gamma) \leq 0$ for all $t \in [t^k, t^{k+1})$.
To define such a function $\eta(\cdot)$, the following Lemma will be used.
\begin{lemma}
\label{lem:constants}
Consider the system \eqref{eq:generalsystem}. There exist constants $c_f, c_g, c_\alpha, c_\gamma, c_h \in \R$ such that for all $\vec{x}^1, \vec{x}^2 \in S$, all of the following inequalities hold:
\begin{align}
\sum_{i \in \N} \nrm{L_{f_i}h^{x_i}(\vec{x}^1) - L_{f_i}h^{x_i}(\vec{x}^2)} &\leq c_f \nrm{\vec{x}^1 - \vec{x}^2}, \label{eq:firstone}\\
    \sum_{i \in \N} \nrm{L_{g_i}h^{x_i}(\vec{x}^1) - L_{g_i}h^{x_i}(\vec{x}^2)} &\leq c_g \nrm{\vec{x}^1 - \vec{x}^2},\\
    \nrm{\alpha(h(\vec{x}^1)) - \alpha(h(\vec{x}^2))} &\leq c_{\alpha} \nrm{\vec{x}^1 - \vec{x}^2},\\
    \sum_{j \in \A} \nrm{\gamma_j^{\max}(\vec{x}^1) - \gamma_j^{\max}(\vec{x}^2)} &\leq c_{\gamma} \nrm{\vec{x}^1 - \vec{x}^2}, \label{eq:fourthone}\\
    \nrm{\sum_{l \in \V} L_{\phi_l} h^{x_l}(\vec{x}^1)} &\leq  c_h \sum_{l \in \V} \phi_l^{\max}\label{eq:fifthone} 
\end{align}
\end{lemma}

\begin{proof}
The inequalities \eqref{eq:firstone}-\eqref{eq:fourthone} follow from the fact that each $f_i$, $g_i$, $\alpha$, and $\gamma_j^{\max}$ are locally Lipschitz, $h \in C^{1,1}_{loc}$, and $S$ is compact. To demonstrate that \eqref{eq:fifthone} holds, observe that
$S$ being compact and $h \in C^{1,1}_{loc}$ implies $\nrm{\frac{\partial h(\vec{x})}{\partial x_l}}$ is bounded on $S$ for all $l \in \V$. Therefore, there exists a constant $c_h \in \R_{>0}$ such that for all $t \in [t^k, t^{k+1})$, $\forall \vec{x} \in S$,
\begin{align*}
    \nrm{\sum_{l \in \V} L_{\phi_l} h^{x_l}(\vec{x}^1)} \leq \sum_{l \in \V} \nrm{\frac{\partial h(\vec{x}^1)}{\partial x_l} \phi_l(t)} \leq c_h \sum_{l \in \V} \phi_l^{\max}
\end{align*}
\end{proof}

In addition to the inequalities in Lemma \ref{lem:constants}, observe that each set $\mathcal{U}_i$ being uniformly compact implies that there exist a constant $u_{\max} \geq 0$ such that $ \nrm{u_i^k} \leq u_{\max}$ for all $i \in \N$, $k \geq 0$. 
Using this definition of $u_{\max}$, the constants defined in Lemma \ref{lem:constants}, and the function $\eps(\cdot)$ in \eqref{eq:epsT} we define the function $\eta : \R_{\geq 0} \rarr \R$ as follows:
\begin{align}
\label{eq:mainetadef}
    \eta(\Gamma) = \pth{c_f + c_g u_{\max} + c_\alpha + c_{\gamma}} \eps(\Gamma) + c_h \sum_{l \in \V} \phi_l^{\max}.
\end{align}
The proof that $\eta(\Gamma) \geq \sup_{t \in [t^k, t^{k+1})} \nrm{e(t,t^k)}$ will be given in Theorem \ref{thm:multiagent}.
This definition of $\eta(\cdot)$ is used to define the following \emph{safety-preserving controls set} for the normal agents in $\N$:

\begin{align}
    K(\vec{x}) =& \big\{\vec{u}^k_{\N} \in \mathcal{U}_{\N} : \sum_{i \in \N} \bkt{L_{f_i} h^{x_i}(\vec{x}^k) + L_{g_i} h^{x_i}(\vec{x}^k) u_i^k} + \nonumber\\
     &\sum_{j \in \A} \gamma_j^{\max}(\vec{x}^k) + \alpha(h(\vec{x}^k)) + \eta(\Gamma) \leq 0 \big\} \label{eq:KNx}
\end{align}

Using this definition of $K(\cdot)$, the following Theorem presents conditions under which the set $S$ can be rendered forward invariant for the system \eqref{eq:generalsystem} with synchronous sampling times despite the actions of the adversarial agents.

\begin{theorem}
\label{thm:multiagent}
Consider the system \eqref{eq:generalsystem} with synchronous sampling times. If $\vec{x}^k \in S$ for $k \geq 0$, then for any control input $\vec{u}^k \in K(\vec{x}^k)$ the trajectory $\vec{x}(t)$ satisfies $\vec{x}(t) \in S$ for all $t \in [k\Gamma, (k+1)\Gamma)$.
\end{theorem}

\begin{proof}
First, denote $\dot{h}'(\vec{x}^k) = \dot{h}(\vec{x}^k) - \sum_{l \in \V} L_{\phi_l} h^{x_l}(\vec{x}^k)$. In words, $\dot{h}'(\vec{x}^k)$ is equal to $\dot{h}(\vec{x}^k)$ with all disturbance-related Lie derivatives subtracted out. Observe that
\begin{align*}
    &\dot{h}(\vec{x}^k) + \pth{\dot{h}(\vec{x}) -\dot{h}(\vec{x}^k) } = \dot{h}'(\vec{x}^k) + \sum_{l \in \V} L_{\phi_l}h^{x_l}(\vec{x}^k) + \\  &\hspace{2em} \pth{\dot{h}(\vec{x}) -  \dot{h}'(\vec{x}^k) - \sum_{l \in \V} L_{\phi_l}h^{x_l}(\vec{x}^k)},\\
    &=\dot{h}'(\vec{x}^k) + \pth{\dot{h}(\vec{x}) - \dot{h}'(\vec{x}^k)}, \\
    &= \dot{h}'(\vec{x}^k) + \pth{\dot{h}'(\vec{x}) - \dot{h}'(\vec{x}^k)} + \sum_{l \in \V} L_{\phi_l}h^{x_l}(\vec{x}).
\end{align*}
From \eqref{eq:generalsystem} and the definition of adversarial agents in \eqref{eq:adversarialu}, define the error term
\begin{align*}
    &e'(t,t^k) = \pth{\dot{h}'(\vec{x}) - \dot{h}'(\vec{x}^k)} + \sum_{l \in \V} L_{\phi_l}h^{x_l}(\vec{x}) + \\ & \hspace{6em} \pth{\alpha(h(\vec{x})) - \alpha(h(\vec{x}^k))},\\
    &= \pth{\sum_{i \in \N} L_{f_i}h^{x_i}(\vec{x}) - L_{f_i}h(\vec{x}^k)} + \sum_{l \in \V} L_{\phi_l}h^{x_l}(\vec{x}) + \\ 
    &\pth{\sum_{i \in \N} \bkt{L_{g_i}h^{x_i}(\vec{x}) - L_{g_i}h^{x_i}(\vec{x}^k)}u_i^k} + \\
    &\sum_{j \in \A} \pth{\gamma_j^{\max}(\vec{x}) - \gamma_j^{\max}(\vec{x}^k)} + \pth{\alpha(h(\vec{x})) - \alpha(h(\vec{x}^k))} 
\end{align*}
Since $t^{k+1} - t^{k} = (k+1)\Gamma - k\Gamma = \Gamma$ for all $k \geq 0$, by Lemma \ref{lem:xdiffbound} we have $\nrm{\vec{x} - \vec{x}^k} \leq \eps(\Gamma)$ for all $t \in [t^k, t^{k+1})$. 
Using Lemma \ref{lem:constants} and the definition of $\eta(\cdot)$ in \eqref{eq:mainetadef} yields the following upper bound on $\nrm{e'(t,t^k)}$:
\begin{align*}
    \sup_{t \in [t^k, t^{k+1})} \nrm{e'(t,t^k)} \leq& \pth{c_f + c_g u_{\max} + c_\alpha + c_{\gamma}}\eps(\Gamma) \\
    &+ c_h\sum_{l \in \V} \phi_l^{\max}, \\
    \implies \sup_{t \in [t^k, t^{k+1})} \nrm{e'(t,t^k)} \leq& \eta(\Gamma).
\end{align*}
Therefore for all $t \in [t^k, t^{k+1})$, it holds that
\begin{align*}
    &\dot{h}(\vec{x}) + \alpha(h(\vec{x})) = \dot{h}'(\vec{x}^k) + \pth{\dot{h}'(\vec{x}) - \dot{h}'(\vec{x}^k)} + \\
    &\hspace{2em}\sum_{l \in \V} L_{\phi_l}h^{x_l}(\vec{x}) +  \alpha(h(\vec{x}^k)) + \pth{\alpha(h(\vec{x})) - \alpha(h(\vec{x}^k))},\\
    &\hspace{2em} = \dot{h}'(\vec{x}^k) + \alpha(h(\vec{x}^k)) + e'(t,t^k),\\
    &\hspace{2em} \leq \dot{h}'(\vec{x}^k) + \alpha(h(\vec{x}^k)) + \sup_{t \in [t^k, t^{k+1})} \nrm{e'(t,t^k)},
\end{align*}
\begin{align*}
    &\hspace{2em} \leq \sum_{i \in \N} \bkt{L_{f_i} h^{x_i}(\vec{x}^k) + L_{g_i} h^{x_i}(\vec{x}^k) u_i^k} + \nonumber\\
     &\hspace{4em}\sum_{j \in \A} \gamma_j^{\max}(\vec{x}^k) + \alpha(h(\vec{x}^k)) + \eta(\Gamma).
\end{align*}
Choosing any $\vec{u}^k_{\N} \in K(\vec{x})$, observe from \eqref{eq:KNx} that
\begin{align}
    &\sum_{i \in \N} \bkt{L_{f_i} h^{x_i}(\vec{x}^k) + L_{g_i} h^{x_i}(\vec{x}^k) u_i^k} + \nonumber\\
     &\hspace{2em}\sum_{j \in \A} \gamma_j^{\max}(\vec{x}^k) + \alpha(h(\vec{x}^k) + \eta(\Gamma) \leq 0,\\
     &\implies \dot{h}(\vec{x}) + \alpha(h(\vec{x})) \leq 0.
\end{align}
Therefore any $\vec{u}_{\N}^k \in K(\vec{x}^k)$ renders the set $S$ forward invariant for all $t \in [t^k, t^{k+1})$. These arguments hold for all $k \in \Z_{\geq 0}$, which concludes the proof.
\end{proof}

\begin{remark}
Using Remark \ref{rem:gammaisdabomb} observe that given any $\vec{u}_{\N}^k \in K(\vec{x}^k)$, for all $u_j \in \mathcal{U}_j$, $j \in \A$ it holds that
\begin{align*}
    &\sum_{i \in \N}  \pth{L_{f_i}h^{x_i}(\vec{x}^k) + L_{g_i}h^{x_i}(\vec{x}^k)u_i} + \nonumber\\ 
    &\hspace{1em}\sum_{j \in \A} \pth{L_{f_j}h^{x_j}(\vec{x}^k) + L_{g_j}h^{x_j}(\vec{x}^k)u_j} + \alpha(h(\vec{x})) + \eta(\Gamma) \leq \\
    &\sum_{i \in \N}  \pth{L_{f_i}h^{x_i}(\vec{x}^k) + L_{g_i}h^{x_i}(\vec{x}^k)u_i} + \sum_{j \in \A} \gamma_j^{\max}(\vec{x}^k) + \\
    &\hspace{1em} \alpha(h(\vec{x})) + \eta(\Gamma).
\end{align*}
Therefore, the results of Theorem \ref{thm:multiagent} hold for any feasible control inputs $u_j \in \mathcal{U}_j$ for any agent $j \in \A$.

In other words, since the analysis of Theorem \ref{thm:multiagent} uses the maximum value functions $\gamma_j^{\max}(\cdot)$ for the contributions of the adversarial agents $j \in \A$ to the LHS of the safety condition \eqref{eq:Nagumosum}, the results of Theorem \ref{thm:multiagent} hold for any control inputs that the adversarial agents can apply within their respective feasible polytopes $\mathcal{U}_j$. In this sense the results of Theorem \ref{thm:multiagent} can be applied to a broader definition of the control inputs of agents in the set $\A$ than the one given in Section \ref{sec:problem}.
\end{remark}

When $K(\vec{x})$ defined in \eqref{eq:KNx} is nonempty, a feasible $\vec{u}_{\N}^* \in K(\vec{x})$ rendering $S$ invariant while minimally modifying $\vec{u}_{\textup{nom}}$ can be computed by solving the following QP:
\begin{alignat}{3}
		&\vec{u}_{\N}^*(\vec{x}^k) = & & \nonumber \\
		&\underset{\vec{u}_{\N} \in \mathcal{U}_{\N}}{\arg\min} & &  \nrm{\vec{u}_{\N} - \vec{u}_{\textup{nom}}}_2^2 \label{eq:QPmultiple} \\
		& \hspace{1em}\text{s.t.} & &  \sum_{i \in \N}  \pth{L_{f_i}h^{x_i}(\vec{x}^k) + L_{g_i}h^{x_i}(\vec{x}^k)u_i} + \nonumber\\ 
		& & & \hspace{2em} \sum_{j \in \A} \gamma_j^{\max}(\vec{x}^k) + \alpha(h(\vec{x})) + \eta(\Gamma) \leq 0\nonumber
\end{alignat}
Note that this QP requires the values of $\gamma_j^{\max}(\vec{x}^k)$, $j \in \A$, which can be solved for via a separate LP. 
Once $\vec{u}_{\N}^*(\vec{x}^k) \in K(\vec{x})$ has been obtained, each agent $i \in \N$ can then apply the local control input $u_i(\vec{x}^k)$.
By Theorem \ref{thm:multiagent}, safety of the entire system is guaranteed under the adversarial behavior for all forward time.
The case when $K(\vec{x})$ is empty is discussed in Section \ref{sec:maxpreserve}.

\subsection{Asynchronous Sampling Times}
\label{sec:asynchronous}

The assumption of identical, synchronous sampling times typically does not hold in practice.
In addition, a distributed system may not have access to a centralized entity to solve the QP in \eqref{eq:QPmultiple} to obtain $\vec{u}_{\N}$.
This subsection will therefore consider asynchronous sampling times and a distributed method for computing local control inputs.
Each agent $i \in \V$ is assumed to have a nominal sampling period $\Gamma_i \in \R_{>0}$ and the perturbed sequence of sampling times 
\begin{align}
\label{eq:perturbedT}
    \mathcal{T}_i = \{t_i^0, t_i^1, \ldots\} \text{ s.t. } t_i^{k+1} - t_i^{k} = \Gamma_i + \delta_i(k),\ \forall k \in \Z_{\geq 0},
\end{align}
where $\delta_i(k)$ is a disturbance satisfying $\nrm{\delta_i(k)} \leq \delta_i^{\max}$. 
The function $\delta_i$ can be used to model time delays due to disturbances such as clock asynchrony or packet drops in the communication network.
We denote $\Gamma^{\max} = \max_{i \in \V} \Gamma_i$ and $\delta^{\max} = \max_{i \in \V} \delta_i^{\max}$.
Recall from Section \ref{sec:problem} that we denote $\vec{x}^{k_i} = \vec{x}(t_i^k)$ and $u_i^{k_i} = u_i(t_i^k)$.

Each agent $i \in \N$ updates its control input $u_i^{k_i}$ at sampling times $t_i^k$ and also broadcasts $u_i^{k_i}$ to all other agents in the network. Each agent $i$ stores the values of the most recently received inputs from its normal in-neighbors $l \in \N$. The notation $\hat{u}_l^{k_i}$ denotes the most recently received input value by agent $i$ from agent $l$ at time $t_i^k$.

Using the definition of $\eta(\cdot)$ from \eqref{eq:mainetadef}, the following safety-preserving control set is defined for each $i \in \N$:
\begin{align*}
    \begin{aligned}
    &K_i(\vec{x}^{k_i}) = \Big\{u_i \in \mathcal{U}_i : L_{f_i} h^{x_i}(\vec{x}^{k_i}) + L_{g_i} h^{x_i}(\vec{x}^{k_i}) u_i \\
    &\hspace{1em} +\sum_{l \in \N \backslash \{i\}} \bkt{L_{f_l} h^{x_l}(\vec{x}^{k_i}) + L_{g_l} h^{x_l}(\vec{x}^{k_i}) \hat{u}_l^{k_i}} \\
     &\hspace{1em} + \sum_{j \in \A} \gamma_j^{\max}(\vec{x}^{k_i}) + \alpha(h(\vec{x}^{k_i}))  + \eta(\Gamma_i + \delta^{\max}) \leq 0 \Big\}
     \end{aligned}
\end{align*}
Theorem \ref{thm:async} presents conditions under which forward invariance of the set $S$ can be guaranteed for the distributed, asynchronous system described in this subsection.


\begin{theorem}
\label{thm:async}
Consider the system \eqref{eq:generalsystem} with sampling times described by \eqref{eq:perturbedT}. If at sampling time $t_i^k$ for $k \geq 0$, $i \in \N$ it holds that $\vec{x}^{k_i} \in S$, then for any $u_i^{k_i} \in K_i(\vec{x}^{k_i})$ the trajectory $\vec{x}(t)$ satisfies $\vec{x}(t) \in S$ for all $t \in [t_i^k, t_i^{k+1})$.
\end{theorem}

\begin{proof}
Choose any $i \in \N$ and consider the time interval $t \in [t_i^{k}, t_i^{k+1})$. Recall that $t_i^{k+1} - t_i^{k} \leq \Gamma_i + \delta^{\max}$ $\forall k \in \Z_{\geq 0}$ by virtue of \eqref{eq:perturbedT} and the definition of $\delta^{\max}$. In particular, this implies $\eps(\Gamma_i + \delta_i(k)) \leq \eps(\Gamma_i + \delta^{\max})$ for all $k \in \Z_{\geq 0}$ since $\eps(\cdot)$ is a class-$\mathcal{K}$ function in $\Gamma$.
For each $i \in \N$ define the value $e_i'(t,t^k)$ in a similar manner as Theorem \ref{thm:multiagent} and observe
\begin{align*}
    &\sup_{t \in [t_i^k, t_i^{k+1})} \nrm{e_i'(t,t_i^k)} \leq \\
    &\pth{c_f + c_g u_{\max} + c_\alpha + c_{\gamma}}\eps(\Gamma_i + \delta^{\max}) + c_h \sum_{l \in \V} \phi_l^{\max},\\
    &\implies \sup_{t \in [t_i^k, t_i^{k+1})} \nrm{e_i'(t,t_i^k)} \leq \eta(\Gamma_i + \delta^{\max})
\end{align*}
The same logic as in Theorem \ref{thm:multiagent} can then be used to demonstrate that $\dot{h}(\vec{x}) + \alpha(h(\vec{x})) \leq 0$ for all $t \in [t_i^k, t_i^{k+1})$.
\end{proof}
Under the communication protocol described previously, each agent can use the most recently received inputs $\hat{u}_l^{k_i}$ from other normal agents to calculate a control input $u_i^{k_i} \in K_i(\vec{x}^{k_i})$. Such a $u_i^{k_i}$ can be computed by solving the following QP:
\begin{alignat}{3}
		&u_i(\vec{x}^{k_i}) = 
		&&\underset{u_i \in \mathcal{U}_i}{\arg\min} \nrm{u_i - u_{i,\textup{nom}}^{k_i}}_2^2 \label{eq:QPdistributed} \\
		& \hspace{1em}\text{s.t.} & &   \pth{L_{f_i}h^{x_i}(\vec{x}^{k_i}) + L_{g_i}h^{x_i}(\vec{x}^{k_i})u_i} + \nonumber\\
		& & & \sum_{l \in \N \backslash \{i\}} \pth{L_{f_l}h^{x_l}(\vec{x}^{k_i}) + L_{g_l}h^{x_l}(\vec{x}^{k_i})\hat{u}_l^{k_i}} + \nonumber\\
		& & & \hspace{2em} \sum_{j \in \A} \gamma_j^{\max}(\vec{x}^{k_i}) + \alpha(h(\vec{x}^{k_i})) + \nonumber\\
		& & & \hspace{2em}\eta(\Gamma_i + \delta^{\max}) \leq 0. \nonumber
\end{alignat}
Like the previous formulations, the values of $\gamma_j^{\max}(\cdot)$ for $j \in \A$ can be calculated via solving a separate LP.
By the results of Theorem \ref{thm:async}, when each $K_i(\vec{x})$ is nonempty and each normal agent applies the controller defined by \eqref{eq:QPdistributed} the multi-agent safe set is rendered forward invariant despite any collective worst-case behavior of the adversarial agents. 

\subsection{Maximum Safety-Preserving Control Action}
\label{sec:maxpreserve}

One of the required conditions of the foregoing results is the nonemptiness of the safety-preserving controls sets $K(\vec{x})$ and $K_i(\vec{x})$, which is also closely related to the feasibility of the respective QPs \eqref{eq:QPmultiple}, \eqref{eq:QPdistributed}. Conditions under which such sets remain nonempty for general systems remains an open question.
Guaranteeing both safety and the feasibility of the QP calculating the control input $u_i(\vec{x}^{k_i})$ has been a recent topic of study \cite{garg2019prescribed, black2020quadratic}, and can depend on the choice of extended class-$\mathcal{K}_\infty$ function $\alpha(\cdot)$.

In contrast, consider the sampled-data control law $u_i^{\min}(\cdot)$ defined in \eqref{eq:umin}.
Intuitively speaking, \eqref{eq:umin} represents the strongest control effort agent $i \in \N$ can apply towards minimizing the LHS of \eqref{eq:Nagumosum}. This control input can be solved for by taking the $\arg\min$ of the minimizing LP in \eqref{eq:LPvminmax}:
\begin{alignat}{3}
\begin{aligned}
\label{eq:LPnormal}
		u_i^{\min}(\vec{x}^{k_i}) =\ 
		&\underset{u_i \in \R^{m_i}}{\arg\min} & & L_{g_i}h^{x_i}(\vec{x}^{k_i}) u_i \\
		& \hspace{1em}\text{s.t.} & &   A_i(\vec{x}^{k_i}) u_i \leq b_i(\vec{x}^{k_i})
\end{aligned}
\end{alignat}
For any system satisfying Assumption 3, the set $\mathcal{U}_i(\vec{x}) = \{u : A_i(\vec{x}) u \leq b_i(\vec{x})\}$ is nonempty for all $\vec{x} \in S$. 
This implies that \eqref{eq:LPnormal} is always guaranteed to be feasible for $\vec{x} \in S$.
However the question remains as to when the control action \eqref{eq:umin} can guarantee forward invariance of $S$.
Towards this end, define the set
\begin{align}
    \partial S_\eps = \brc{x \in S : \min_{z \in \partial S} \nrm{x - z} \leq \eps },\quad \eps > 0.
    \label{eq:Seps}
\end{align}
In words, $\partial S_\eps$ is an ``inner boundary region" of $S$ which includes all points in $S$ within distance $\eps$ of $\partial S$ with respect to a chosen norm.
The next theorem presents a sufficient condition for when each normal agent applying $u_i^{\min}(\cdot)$ renders $S$ forward invariant in the presence of an adversarial set $\A$.

\begin{theorem}
\label{thm:asyncnormalmaxeffort}
Let $\eps^* = \eps(\Gamma^{\max} + 2 \delta^{\max})$ and define the sets $\partial S_{\eps^*}, \partial S_{2\eps^*}$ as per \eqref{eq:Seps}.
Suppose that each normal agent $i \in \N$ applies the control input $u_i^{\min}(\vec{x}^{k_i})$ from \eqref{eq:umin} for all sampled states $\vec{x}^{k_i}$ satisfying $\vec{x}^{k_i} \in \partial S_{2\eps^*}$.
Then $S$ is forward invariant if $\vec{x}(0) \in S \backslash \partial S_{2\eps^*}$ and the following condition holds:
\begin{align}
    \max_{\vec{x} \in \partial S_{2\eps^*}}\Bigg[
    &\sum_{i \in \N} \max_{\vec{x}^i \in B(\vec{x}, \eps^*)} \bkt{\gamma_i^{\min}(\vec{x}^i)} +  \label{eq:asyncminmaxcondition}\\
    &\sum_{j \in \A} \gamma_j^{\max}(\vec{x}) +
    \alpha(h(\vec{x})) \Bigg] \leq - \eta(\Gamma^{\max} + 2\delta^{\max}). \nonumber
\end{align}
\end{theorem}

\begin{proof}
The proof first demonstrates that the most recently sampled states of all agents always lie within a closed ball of radius $\eps^* = \eps(\Gamma^{\max} + 2 \delta^{\max})$. 
Next, it shows that $\vec{x}(0) \in S \backslash \partial S_{\eps^*}$ implies that $\vec{x}(t)$ cannot leave $S$ without all agents sampling the state at least once within the region $\partial S_{\eps^*}$.
Finally, it is shown that this fact combined with \eqref{eq:asyncminmaxcondition} implies that $S$ is forward invariant.

Choose any $i \in \N$ and any sampling time $t_i^k$ for agent $i$. By the definition of $\Gamma^{\max}$ and $\delta^{\max}$, the next sampling time $t_i^{k+1}$ satisfies $t_i^{k+1} - t_i^{k} \leq t_i^{k} + \Gamma^{\max} + 2 \delta^{\max}$. Since this holds for all $i \in \V$, given any $i_1, i_2 \in \N$ and interval $[t_{i_1}^k, t_{i_1}^k + \Gamma^{\max} + \delta^{\max}]$, there exists a sampling time for $i_2$ satisfying $t_{i_2}^{k'} \in [t_{i_1}^k, t_{i_1}^k + \Gamma^{\max} + 2\delta^{\max}]$. Using Lemma \ref{lem:xdiffbound}, this implies that the maximum normed difference between any two most recently sampled states $\vec{x}(t_{i_1}^{k^*})$ and $\vec{x}(t_{i_2}^{k^*})$ satisfies $\nrm{\vec{x}(t_{i_1}^{k^*}) - \vec{x}(t_{i_2}^{k^*})} \leq \eps(\Gamma^{\max} + 2\delta^{\max}) = \eps^*$. Since this holds for all $i_1, i_2 \in \V$ at any $t_{i_1}^k$, the most recently sampled states of all agents therefore always lie within a ball of radius $\eps^*$.

Next, consider any agent $i$ with sampling time $t_i^{k}$ such that $\vec{x}(t_i^k) \in S \backslash \partial S_{\eps^*}$ and $\vec{x}(t_i^{k+1}) \not\in S \backslash \partial S_{\eps^*}$. Since $\nrm{\vec{x}(t_i^{k+1}) - \vec{x}(t_i^k)} \leq \eps^*$ by previous arguments, this implies that $\vec{x}(t_i^{k+1}) \in \partial S_{\eps^*}$. 
Therefore $\vec{x}(0) \in S \backslash \partial S_{\eps^*}$ implies that $\vec{x}$ cannot leave $S$ without each agent $i \in \N$ having at least one sampling time $t_i^{k}$ such that $\vec{x}^{k_i} \in \partial S_{\eps^*}$.
Note that $\vec{x}(0) \in S \backslash \partial S_{2\eps^*}$ as per the Theorem statement implies that $\vec{x}(0) \in S \backslash \partial S_{\eps^*}$ since $\partial S_{\eps^*} \subset \partial S_{2\eps^*}$.

Define $e'(t,t_i^k)$ in a similar manner to Theorem \ref{thm:multiagent}. Observe that $t_i^{k+1} - t_i^k \leq \Gamma^{\max} + 2\delta^{\max}$ for all $i \in \N$. In addition, for any $i_1, i_2 \in \N$ with most recent sampling times $t_{i_1}^{k_{i_1}}$ and $t_{i_2}^{k_{i_2}}$, it can be shown that $|t_{i_1}^{k_{i_1}} - t_{i_2}^{k_{i_2}}| \leq \Gamma^{\max} + 2\delta^{\max}$.
Therefore on any interval $t \in [t_{i_1}^{k_{i_1}}, t_{i_2}^{k_{i_2}})$, we have
\vspace{-0.1em}
\begin{align*}
    &\sup_{t \in [t_{i_1}^{k_{i_1}}, t_{i_2}^{k_{i_2}})} \nrm{e'(t,t_{i_1}^{k_{i_1}})} \leq \\
    &\pth{c_f + c_g u_{\max} + c_\alpha + c_{\gamma}}\eps^* + c_h \sum_{l \in \V} \phi_l^{\max},\\
    &\implies \sup_{t \in [t_{i_1}^{k_{i_1}}, t_{i_2}^{k_{i_2}})} \nrm{e'(t,t_{i_1}^{k_{i_1}})} \leq \eta(\Gamma^{\max} + 2\delta^{\max}).
\end{align*}
Choose the first sampling time $t_{i_1}^{k_{i_1}}$ such that $t_{i_1}^{k_{i_1}} \geq \Gamma^{\max} + 2\delta^{\max}$ and $\vec{x}^{k_{i_1}} \in \partial S_{\eps^*} \subset \partial S_{2\eps^*}$.
Since $\vec{x}(0) \in S_{2\eps^*}$ by the Theorem statement, it can be shown using prior arguments that such a sampling time is guaranteed to exist.
This choice of $t_{i_1}^{k_{i_1}}$ implies that all agents have sampled at least once at or before $t_{i_1}^{k_{i_1}}$.
Let $t_{i_2}^{k_{i_2}} > t_{i_1}^{k_{i_1}}$ be the next normal agent sampling time strictly greater than $t_{i_1}^{k_{i_1}}$, with the associated agent denoted $i_2 \in \N$. Let $\vec{x}^{k_{i_1}},\ldots, \vec{x}^{k_{i_{|\N|}}}$ denote the most recently sampled states of all normal agents. By prior arguments $\vec{x}^{k_{i_l}} \in \overbar{B}(\vec{x}^{k_{i_1}}, \eps^*)$ for all $l \in 1,\ldots,|\N|$, and therefore by \eqref{eq:asyncminmaxcondition} at time $t_{i_1}^{k_{i_1}}$ we have
\begin{align*}
    &\sum_{p \in 1,\ldots,|\N|} \gamma_i^{\min}(\vec{x}^{k_{i_p}}) - \sum_{j \in \A} \gamma_j^{\max}(\vec{x}^{k_{i_1}}) + \alpha(h(\vec{x}^{k_{i_1}})) + \\
    &\hspace{2em} \eta^*(\Gamma^{\max} + 2\delta^{\max}) \leq 0.
\end{align*}
From this it holds that for all $t \in [t_{i_1}^{k_{i_1}}, t_{i_2}^{k_{i_2}})$ we have
\begin{align*}
    &\dot{h}(\vec{x}) + \alpha(h(\vec{x})) \leq \sum_{p \in 1,\ldots,N} \gamma_i^{\min}(\vec{x}^{k_{i_p}}) - \sum_{j \in \A} \gamma_j^{\max}(\vec{x}^{k_{i_1}}) + \\ &\hspace{2em} \alpha(h(\vec{x}^{k_{i_1}})) +
     \eta^*(\Gamma^{\max} + 2\delta^{\max}) \leq 0. 
\end{align*}
It follows that $S$ is forward invariant on the interval $t \in [t_{i_1}^{k_{i_1}}, t_{i_2}^{k_{i_2}})$. 
The preceding arguments can be repeated for any subsequent adjacent sampling times $t_{i_l}^{k_{i_l}}, t_{i_p}^{k_{i_p}}$, $t_{i_l}^{k_{i_l}} < t_{i_p}^{k_{i_p}}$ to show that $S$ is forward invariant on $[t_{i_l}^{k_{i_l}}, t_{i_p}^{k_{i_p}})$, which concludes the proof.
\end{proof}

\section{Safe Set Functions with High Relative Degree}
\label{sec:highdegree}

 It has been demonstrated in prior literature that there exist safe set functions $h$ where agents' control inputs do not appear in the expression for the time derivative $\dot{h}(\vec{x})$, i.e., $\frac{\partial h(\vec{x})}{\partial x_i} g_i(\vec{x}) = \bm 0$ for all $\vec{x}$ \cite{xiao2019control, nguyen2016exponential}.
 These functions are said to have \emph{high relative degree with respect to the system dynamics}.
 In such cases, prior literature has considered methods for computing continuous-time controllers which provably maintain forward invariance of the safe set. These prior results do not consider systems with sampled-data dynamics however, nor do they consider the presence of agents behaving in an adversarial manner. In this section we extend our previous results to consider a class of safe set functions having high relative degree w.r.t. system dynamics.

 In prior work, safety of systems without disturbances and having continuous control inputs using safe set functions having high relative degree w.r.t. system dynamics is typically considered as follows: a function $h : \R^{\bar{n}} \rarr \R$ describing the safe set is used to define a series of functions $\psi_j : \R^{\bar{n}} \rarr \R$, $j=1,\ldots,q$ in the following manner:
 \begin{align}
\begin{aligned}
\label{eq:oldpsifuncs}
    \psi_0(\vec{x}) &\triangleq h(\vec{x}), \\
    \psi_1(\vec{x}) &\triangleq \dot{\psi}_0(\vec{x}) + \alpha_1(\psi_0(\vec{x})), \\
    &\vdots \\
    \psi_q(\vec{x}) &\triangleq \dot{\psi}_{q-1} + \alpha_{q}(\psi_{q-1}(\vec{x})),
\end{aligned}
\end{align}
where each $\alpha_j : \R \rarr \R$ is an extended class-$\mathcal{K}_\infty$ function. The integer $q \in \Z_{\geq 1}$ is chosen to be the smallest integer such that a control input $u_i$ for some $i \in \V$ appears in the expression for $\psi_q(\vec{x})$. The integer $q$ is called the \emph{relative degree} of $h$ w.r.t the system dynamics.
The functions in \eqref{eq:oldpsifuncs} are associated with the following series of sets:
\begin{align}
\begin{aligned}
\label{eq:Spsisets}
    S_1 &\triangleq \{\vec{x} \in \R^{\bar{n}} : \psi_0(\vec{x}) \leq 0 \}. \\
    S_2 &\triangleq \{\vec{x} \in \R^{\bar{n}} : \psi_1(\vec{x}) \leq 0 \}. \\
    \vdots \\
    S_q &\triangleq \{\vec{x} \in \R^{\bar{n}} : \psi_{q-1}(\vec{x}) \leq 0 \}. \\
    \end{aligned}
\end{align}
For brevity, we denote $S_I \triangleq \bigcap_{r=1}^p S_r$.
The following result from prior literature applies to systems with \emph{continuous} control inputs:
\begin{theorem}[\cite{xiao2019control}]
\label{thm:continuousHOCBF}
    Suppose $\vec{x}(t_0) \in \bigcap_{i=1}^p S_i$. Then the set $\bigcap_{i=1}^q S_i$ is rendered forward invariant under any Lipschitz continuous controller $\vec{u}(t)$ that ensures the condition $\psi_q(\vec{x}(t)) \leq 0$ for all $t \geq t_0$.
\end{theorem}
However, this prior result considers continuous control inputs, does not account for the disturbances $\psi_i(t)$ in \eqref{eq:generalsystem}, and does not consider the presence of agents behaving in an adversarial manner.

This section will extend the results in the previous section to present a method for normally-behaving agents with the sampled-data dynamics \eqref{eq:generalsystem} to maintain safety using a safe set function $h$ with high relative degree w.r.t. \eqref{eq:generalsystem} in the presence of adversarial agents.
First, to address the presence of the disturbances $\phi_i(t)$, $i \in \V$, recall from Lemma \ref{lem:constants} that there exists a constant $c_h \geq 0$ such that $\nrm{\sum_{i \in \V} L_{\phi_i} h^{x_i}(\vec{x})} \leq c_h \sum_{i \in \V} \phi_i^{\max}$. We define the constant
\begin{align}
\label{eq:xi}
    \xi  = c_h \sum_{i \in \V} \phi_i^{\max}.
\end{align}
The function $h(\vec{x})$ and constant $\xi$ are used to define a series of functions $\psi_j^d : \R^{\bar{n}} \rarr \R$, $j=1,\ldots,q$ in the following manner:
\begin{align}
\begin{aligned}
\label{eq:psifuncs}
    \psi_0^d(\vec{x}) &\triangleq h(\vec{x}), \\
    \psi_1^d(\vec{x}) &\triangleq \sum_{i \in \V}L_{f_i} h^{x_i}(\vec{x}) + \xi + \alpha_1(\psi_0^d(\vec{x})), \\
    \psi_2^d(\vec{x}) &\triangleq \dot{\psi}_1^d(\vec{x}) + \alpha_2(\psi_1^d(\vec{x})), \\
    &\vdots \\
    \psi_q^d(\vec{x}) &\triangleq \dot{\psi}_{q-1}^d + \alpha_{q}(\psi_{q-1}^d(\vec{x})),
\end{aligned}
\end{align}
where each $\alpha_j : \R \rarr \R$ is an extended class $\mathcal{K}_\infty$ function and is locally Lipschitz on $\R$.
We make the following assumptions:
\begin{assume}
\label{assume:relativedeg}
The agent inputs $u_i$ for all $i \in \V$ appear simultaneously in $\psi_q^d(\vec{x})$, $q \in \Z_{\geq 1}$, and are all absent in all $\psi_{j}^d$, $0 \leq j < q$. 
\end{assume}
\begin{assume}
\label{assume:psiLC}
The function $\psi_{q-1}^d$ satisfies $\psi_{q-1}^d \in \mathcal{C}_{loc}^{1,1}$.
\end{assume}
In particular, this section considers cases where the relative degree $q > 1$, since cases where $q = 1$ can be treated by the results in Section \ref{sec:mainresults}.
The sets $S_1^d, \ldots, S_q^d$ and $S_I^d$ are defined as
\begin{align}
\begin{aligned}
\label{eq:Spsisets}
    S_1^d &\triangleq \{\vec{x} \in \R^{\bar{n}} : \psi_0^d(\vec{x}) \leq 0 \}. \\
    S_2^d &\triangleq \{\vec{x} \in \R^{\bar{n}} : \psi_1^d(\vec{x}) \leq 0 \}. \\
    \vdots \\
    S_q^d &\triangleq \{\vec{x} \in \R^{\bar{n}} : \psi_{q-1}^d(\vec{x}) \leq 0 \} \\
    S_I^d &\triangleq \bigcap_{k=1}^q S_k^d.
    \end{aligned}
\end{align}

The following Lemma will be needed for our main result.
It allows the analysis to consider disturbances $\phi_i(t)$ which are not differentiable in time.
\begin{lemma}
\label{lem:getridofdist}
Let $h$ have relative degree $q > 1$ with respect to \eqref{eq:generalsystem}. Then it holds that $\dot{\psi}_0^d(\vec{x}) + \alpha_1(\psi_0^d(\vec{x})) \leq \psi_1^d(\vec{x})$ $\forall t \geq 0$.
\end{lemma}

\begin{proof}
Since $q > 0$, by Assumption \ref{assume:relativedeg} the time derivative of $\psi_0^d(\vec{x})$ satisfies $\dot{\psi}_0^d(\vec{x}) = \sum_{i \in \V} L_{f_i}h^{x_i}(\vec{x}) +  L_{\phi_i} h^{x_i}(\vec{x})$.
From Lemma \ref{lem:constants} and equation \eqref{eq:xi} we have $\nrm{\sum_{i \in \V} L_{\phi_i} h^{x_i}(\vec{x})} \leq c_h \sum_{i \in \V} \phi_i^{\max} = \xi$. Using \eqref{eq:psifuncs} it follows that
\begin{align*}
    \dot{\psi}_0^d(\vec{x}) + \alpha_1(\psi_0^d(\vec{x})) &\leq \sum_{i \in \V} \pth{L_{f_i}h^{x_i}(\vec{x})} + \xi + \alpha_1(\psi_0^d(\vec{x})), \\
    &= \psi_1^d(\vec{x}),
\end{align*}
which concludes the proof.
\end{proof}
By upper bounding the term $L_{\phi_i} h^{x_i}(\vec{x})$ with the constant $\xi$, no time derivatives of $\phi_i(t)$ appear in the functions $\psi_2,...\psi_q$.

Similar to Theorem \ref{thm:continuousHOCBF}, to achieve forward invariance of $S_I^d$ under a ZOH control law the key condition is to show that $\psi_q^d(\vec{x}(t), \vec{u}(t)) \leq 0$ for all $t \geq t_0$. Using a similar method as the prior section, for a ZOH $\vec{u}^k$ we can define the error term
\begin{align}
\label{eq:errorpsi}
    e_{\psi}(t,t^k) &= \pth{\psi_q^d(\vec{x}) - \psi_q^d(\vec{x}^k)}.
\end{align}
For all $t \in [t_i^k, t_i^{k+1})$ it therefore holds that
\begin{align*}
    \psi_q^d(\vec{x}(t)) &= \psi_q^d(\vec{x}^k) + e_{\psi}(t,t_i^k) \\
    &\leq \psi_q^d(\vec{x}^k) + \sup_{t \in [t_i^k, t_i^{k+1})} \nrm{e_{\psi}(t,t^k)}.
\end{align*}
If it holds that $\psi_q^d(\vec{x}^k) + \sup_{t \in [t_i^k, t_i^{k+1})} \nrm{e_{\psi}(t,t^k)} \leq 0$, then for all $t \in [t_i^k, t_i^{k+1})$ we therefore have
$\psi_q^d(\vec{x}) \leq 0$ for all $t \in [t_i^k, t_i^{k+1})$. 

Consider the asynchronous system with perturbed sampling times from section \ref{sec:asynchronous} such that Assumption \ref{assume:relativedeg} is satisfied and the function $h$ has relative degree $q$ under \eqref{eq:generalsystem}.
Using \eqref{eq:generalsystem} and \eqref{eq:psifuncs}, the function $\psi_q(\vec{x})$ can be expanded into the expression
\begin{align}
    \psi_q(\vec{x}) =& \dot{\psi}_{q-1}(\vec{x}) + \alpha_q(\psi_{q-1}(\vec{x})), \nonumber\\
    =& \sum_{i \in \N} L_{f_i} \psi_{q-1}^{x_i}(\vec{x}) + L_{g_i} \psi_{q-1}^{x_i}(\vec{x}) u_i + \label{eq:psi_q_extended}\\
    &\sum_{j \in \N} L_{f_j} \psi_{q-1}^{x_j}(\vec{x}) + L_{g_j} \psi_{q-1}^{x_j}(\vec{x}) u_j + \alpha_q(\psi_{q-1}(\vec{x})) \nonumber 
\end{align}
Observe that the RHS of \eqref{eq:psi_q_extended} is affine in $\vec{u}$. This follows from \eqref{eq:psi_q_extended} and the definition of the relative degree $q$ from Assumption \ref{assume:relativedeg}.
Similar to equation \eqref{eq:vLPs}, define the functions
\begin{align}
\begin{aligned}
    \hat{\gamma}_i^{\min}(\vec{x}) &= \min_{u_i \in \mathcal{U}_i} \bkt{L_{f_i}\psi_{q-1}^{x_i}(\vec{x}) + L_{g_i}\psi_{q-1}^{x_i}(\vec{x}) u_i}, \\
    \hat{\gamma}_i^{\max}(\vec{x}) &= \max_{u_i \in \mathcal{U}_i} \bkt{L_{f_i}\psi_{q-1}^{x_i}(\vec{x}) + L_{g_i}\psi_{q-1}^{x_i}(\vec{x}) u_i}. \label{eq:vLPpsi}
    \end{aligned}
\end{align}
As in the previous section, the functions $\hat{\gamma}_i^{\min}$, $\hat{\gamma}_i^{\max}$ can be shown to be locally Lipschitz on the set $S_I$.
\begin{lemma}
\label{lem:vLipsPsi}
If the interior of $\mathcal{U}_i(\vec{x})$ is nonempty for all $\vec{x} \in S_I$ and $\mathcal{U}_i(\vec{x})$ is uniformly compact near $\vec{x}$ for all $\vec{x} \in S_I$, then the functions $\hat{\gamma}_i^{\min}(\cdot)$ and $\hat{\gamma}_i^{\max}(\cdot)$ defined by \eqref{eq:vLPpsi} are locally Lipschitz on $S_I$.
\end{lemma}

\begin{proof}
The result follows from Assumption \ref{assume:psiLC} and by using similar arguments as in Lemma \ref{lem:vLips}.
\end{proof}

Similar to Section \ref{sec:mainresults}, the following result will be needed to define a function $\eta': \R_{\geq 0} \rarr \R$ that will be used to upper bound the normed error term $\sup_{t \in [t_i^k, t_i^{k+1})} \nrm{e_{\psi}(t,t^k)}$:

\begin{lemma}
\label{lem:psiconstants}
Consider the system \eqref{eq:generalsystem} and the function $\psi_{q-1}^d(\vec{x})$. There exist constants $c_f', c_g', c_\alpha', c_{\hat{\gamma}}' \in \R$ such that for all $\vec{x}^1, \vec{x}^2 \in S_I^d$, all of the following inequalities hold:
{
\small
\begin{align*}
\sum_{i \in \N} \nrm{L_{f_i}(\psi_{q-1}^d)^{x_i}(\vec{x}^1) - L_{f_i}(\psi_{q-1}^d)^{x_i}(\vec{x}^2)} &\leq c_f' \nrm{\vec{x}^1 - \vec{x}^2},\\
    \sum_{i \in \N} \nrm{L_{g_i}(\psi_{q-1}^d)^{x_i}(\vec{x}^1) - L_{g_i}(\psi_{q-1}^d)^{x_i}(\vec{x}^2)} &\leq c_g' \nrm{\vec{x}^1 - \vec{x}^2},\\
    \nrm{\alpha_q(\psi_{q-1}^d(\vec{x}^1)) - \alpha_q(\psi_{q-1}^d(\vec{x}^2))} &\leq c_{\alpha}' \nrm{\vec{x}^1 - \vec{x}^2},\\
    \sum_{j \in \A} \nrm{\hat{\gamma_j}^{\max}(\vec{x}^1) - \hat{\gamma_j}^{\max}(\vec{x}^2)} &\leq c_{\hat{\gamma}}' \nrm{\vec{x}^1 - \vec{x}^2},
\end{align*}
}
\end{lemma}

\begin{proof}
Follows from $\psi_{q-1} \in \mathcal{C}_{loc}^{1,1}$ by Assumption \ref{assume:psiLC}, from $\alpha_q$ being locally Lipschitz on $\R$ by definition, and from $\hat{\gamma}_i^{\min}, \hat{\gamma}_i^{\max}$ being locally Lipschitz by Lemma \ref{lem:vLipsPsi}.
\end{proof}

Using the constants defined in Lemma \ref{lem:psiconstants} and the function $\eps(\cdot)$ in \eqref{eq:epsT}, we define the function $\eta' : \R_{\geq 0} \rarr \R$ as follows:
\begin{align}
\label{eq:etaprimedef}
    \eta'(\Gamma) = \pth{c_f' + c_g' u_{\max} + c_\alpha' + c_{\hat{\gamma}}'} \eps(\Gamma).
\end{align}
This definition of $\eta'(\cdot)$ is used to define the following safety-preserving controls sets for $i \in \V$. Recall from Section \ref{sec:asynchronous} that $\hat{u}_l^{k_i}$ denotes the most recently received input value by agent $i \in \N$ from agent $l \in \N$ at time $t_i^k$.
\begin{align*}
    &K_i^{\psi}(\vec{x}^{k_i}) = \brc{u_i \in \mathcal{U}_i : \psi_q(\vec{x}^{k_i}) \leq 0},\\
    &\hspace{1em} = \Big\{u_i \in \mathcal{U}_i : L_{f_i} \psi_{q-1}^{x_i}(\vec{x}^{k_i}) + L_{g_i} \psi_{q-1}^{x_i}(\vec{x}^{k_i}) u_i \\
    &\hspace{1em} +\sum_{l \in \N \backslash \{i\}} \bkt{L_{f_l} \psi_{q-1}^{x_l}(\vec{x}^{k_i}) + L_{g_l} \psi_{q-1}^{x_l}(\vec{x}^{k_i}) \hat{u}_l^{k_i}} \\
     &\hspace{1em} + \sum_{j \in \A} \gamma_j^{\max}(\vec{x}^{k_i}) + \alpha(\psi_{q-1}(\vec{x}^{k_i}))  + \eta'(\Gamma_i + \delta^{\max}) \leq 0 \Big\}.
\end{align*}
The next Theorem demonstrates conditions under which the set $S$ may be rendered forward invariant for trajectories of the system \eqref{eq:generalsystem}.

\begin{theorem}
\label{thm:ARHO_CBF}
Consider the system \eqref{eq:generalsystem} with sampling times described by \eqref{eq:perturbedT}.
Let $\psi_1^d,\ldots,\psi_q^d$ be defined as in \eqref{eq:psifuncs}.
If at sampling time $t_i^k$ for $k \geq 0$, $i \in \N$ it holds that $\vec{x}^{k_i} \in S_I$, then for any $u_i^{k_i} \in K_i^{\psi}(\vec{x}^{k_i})$ the trajectory $\vec{x}(t)$ satisfies $\vec{x}(t) \in S_I$ for all $t \in [t_i^k, t_i^{k+1})$.
\end{theorem}

\begin{proof}
From \eqref{eq:errorpsi} and \eqref{eq:psifuncs}, we have
\begin{align}
    e_{\psi}(t,t^k) =& \pth{\psi_q^d(\vec{x}) - \psi_q^d(\vec{x}^k)}, \nonumber \\
    =& \sum_{i \in \N} \pth{L_{f_i} (\psi_{q-1}^d)^{x_i}(\vec{x}) - L_{f_i} (\psi_{q-1}^d)^{x_i}(\vec{x}^k)} + \nonumber\\
    &\sum_{i \in \N} \pth{L_{g_i}(\psi_{q-1}^d)^{x_i}(\vec{x}) - L_{g_i}(\psi_{q-1}^d)^{x_i}(\vec{x}^k)}u_i^k + \nonumber\\
    & \sum_{j \in \A} \pth{\hat{\gamma}_j^{\max}(\vec{x}) - \hat{\gamma}_j^{\max}(\vec{x}^k)} + \nonumber\\
    &\pth{\alpha_q(\psi_{q-1}(\vec{x})) - \alpha_q(\psi_{q-1}(\vec{x}^k))} \label{eq:idontcareanymore}
\end{align}
Choose any $i \in \N$ and consider the time interval $t \in [t_i^{k}, t_i^{k+1})$. Recall that $t_i^{k+1} - t_i^{k} \leq \Gamma_i + \delta^{\max}$ $\forall k \in \Z_{\geq 0}$ by virtue of \eqref{eq:perturbedT} and the definition of $\delta^{\max}$. In particular, this implies $\eps(\Gamma_i + \delta_i(k)) \leq \eps(\Gamma_i + \delta^{\max})$ for all $k \in \Z_{\geq 0}$ since $\eps(\cdot)$ is a class-$\mathcal{K}$ function in $\Gamma$. Using equations \eqref{eq:idontcareanymore}, \eqref{eq:etaprimedef}, Lemma \ref{lem:psiconstants}, and Lemma \eqref{lem:xdiffbound} observe that 
\begin{align*}
    &\sup_{t \in [t_i^{k}, t_i^{k+1})} \nrm{e_{\psi}(t,t_i^k)} \leq \\
    &\pth{c_f' + c_g' u_{\max} + c_{\alpha}' + c_{\hat{\gamma}}'}\eps(\Gamma_i + \delta^{\max}), \\
    &\implies \sup_{t \in [t_i^{k}, t_i^{k+1})} \nrm{e_{\psi}(t,t_i^k)} \leq \eta'(\Gamma_i + \delta^{\max})
\end{align*}
The same logic as in Theorem \ref{thm:multiagent} can then be used to demonstrate that for any $u_i \in K_i^{\psi}(\vec{x}^{k_i})$ it holds that $\psi_q^d(\vec{x}(t)) \leq \psi_q^d(\vec{x}^k) + \eta'(\Gamma_i + \delta^{\max}) \leq 0$ for all $t \in [t_i^k, t_i^{k+1})$.

We next demonstrate that $\psi_q^d(\vec{x}) \leq 0$ for all $t \in [t_i^k, t_i^{k+1})$ implies that $\vec{x} \in S_I^d$ $\forall t \in [t_i^k, t_i^{k+1})$.
For brevity, denote $I_i^k = [t_i^k, t_i^{k+1})$.
Since $\psi_q^d(\vec{x}) \leq 0$ for all $t \in I_i^k$, from \eqref{eq:psifuncs} this implies that $\dot{\psi}_{q-1}^d(\vec{x}) + \alpha_q(\psi_{q-1}^d(\vec{x})) \leq 0$ for all $t \in I_i^k$. By Nagumo's Theorem, this implies that $\psi_{q-1}^d(\vec{x}) \leq 0$ for all $t \in I_i^k$. Continuing inductively, observe that for all $2 \leq j \leq q$ it holds that $\psi_{j}^d(\vec{x}) \leq 0$ $\forall t \in S_i^k$, which implies $\dot{\psi}_{j-1}^d(\vec{x}) + \alpha_j(\psi_{j-1}^d(\vec{x})) \leq 0$ $\forall t \in I_i^k$. Therefore, by Nagumo's Theorem it holds that $\psi_{j-1}^d(\vec{x}) \leq 0$ $\forall t \in I_i^k$. By this logic we therefore have $\psi_q^d(\vec{x}) \leq 0 \implies \psi_{q-1}^d(\vec{x}) \leq 0 \implies \ldots \implies \psi_1^d(\vec{x}) \leq 0$ $\forall t \in I_i^k$.
By Lemma \ref{lem:getridofdist}, $\psi_1^d(\vec{x}) \geq \dot{\psi}_0^d(\vec{x}) + \alpha_1(\psi_0^d(\vec{x}))$ for all $t \geq 0$. Therefore $\psi_1^d(\vec{x}) \leq 0$ $\forall t \in I_i^k$ implies that $\dot{\psi}_0^d(\vec{x}) + \alpha_1(\psi_0^d(\vec{x})) \leq 0$ $\forall t \in I_i^k$, which implies that $\psi_0^d(\vec{x}) \leq 0$ $\forall t \in I_i^k$.
Using the definitions in \eqref{eq:Spsisets}, it follows that the trajectory $\vec{x}(t)$ satisfies $\vec{x}(t) \in S_I^d = \bigcap_{j=1}^q S_j^d$ for all $t \in I_i^k$, which concludes the proof.
\end{proof}

Under the communication protocol described in Section \ref{sec:asynchronous}, each normal agent $i \in \N$ can use the most recently received inputs $\hat{u}_l^{k_i}$ from other normal agents to calculate a control input $u_i^{k_i} \in K_i(\vec{x}^{k_i})$. Such a $u_i^{k_i}$ can be computed by solving the following QP:
\begin{alignat}{2}
	    &u_i(\vec{x}^{k_i}) = 
		\underset{u_i \in \mathcal{U}_i}{\arg\min}  \nrm{u_i - u_{i,\textup{nom}}^{k_i}}_2 \label{eq:psiQPdistributed} \\
		&\text{s.t.}\quad  \pth{L_{f_i}(\psi_{q-1}^d)^{x_i}(\vec{x}^{k_i}) + L_{g_i}(\psi_{q-1}^d)^{x_i}(\vec{x}^{k_i})u_i} + \nonumber\\
		&  \sum_{l \in \N \backslash \{i\}} \pth{L_{f_l}(\psi_{q-1}^d)^{x_l}(\vec{x}^{k_i}) + L_{g_l}(\psi_{q-1}^d)^{x_l}(\vec{x}^{k_i})\hat{u}_l^{k_i}} + \nonumber\\
		& \hspace{-0.5em} \sum_{j \in \A} \gamma_j^{\max}(\vec{x}^{k_i}) + \alpha(\psi_{q-1}^d(\vec{x}^{k_i})) + \eta(\Gamma_i + \delta^{\max}) \leq 0. \nonumber
\end{alignat}

\subsection{Discussion} 

This section has considered systems satisfying Assumption \ref{assume:relativedeg} where all agents' inputs appear simultaneously for the same relative degree $q$ of $h$ under \eqref{eq:generalsystem}. However, Assumption \ref{assume:relativedeg} may not be satisfied in general for systems composed of agents with heterogeneous control-affine dynamics. A simple example is a system composed of both single- and double-integrator agents with states in $\R^3$. 
Only control inputs for the single integrators appear in the function $\psi_1(\vec{x}, \vec{u})$ from $\eqref{eq:psifuncs}$, while the function $\psi_2(\vec{x}, \vec{u}, \dot{\vec{u}}) = \frac{d}{dt}(\psi_1(\vec{x}, \vec{u})) + \alpha_2(\psi_1(\vec{x}, \vec{u}))$ simultaneously contains single-integrator inputs, time-derivatives of single-integrator inputs, and double-integrator inputs.

The extension of this paper's results to the general case does not immediately follow for two reasons. First, the time derivatives of inputs $\dot{\vec{u}}, \ddot{\vec{u}}, \ldots, \vec{u}^{(r)}$, $r \in \Z_{\geq 1}$ for ZOH controllers are undefined at sampling instances. This necessitates a careful and rigorous mathematical analysis of the behavior of each $\psi_j(\vec{x}, \vec{u}, \dot{\vec{u}},\ldots)$ to ensure that safety can indeed be guaranteed under a ZOH control law.
Second, when considering multi-agent safe set functions $h(\cdot)$ the functions $\psi_j$ for higher values of $j$ are not guaranteed to be convex in $\vec{u}$ when Assumption \ref{assume:relativedeg} is not satisfied. This nonconvexity inhibits the ability to efficiently compute safety-preserving control inputs. We therefore leave the general case as an interesting direction for future investigation.

\section{Simulations}
\label{sec:simulations}

Simulations were performed using a combination of MATLAB and the Julia programming language \cite{Julia-2017}. The simulations used the OSQP optimization package \cite{osqp} and the ForwardDiff automatic differentiation package \cite{RevelsLubinPapamarkou2016}.

While forward invariance of the safe set is guaranteed for any control inputs in the safety-preserving controls sets $K_i(\cdot)$, $K_i^{\psi}(\cdot)$, a key issue is guaranteeing that the sets $K_i(\cdot)$, $K_i^{\psi}(\cdot)$ remain nonempty for all forward time. Due to the difficulty of calculating forward reachable sets for general nonlinear systems subject to disturbances \cite{chen2018decomposition, Liebenwein2018SamplingBasedAA}, prior literature typically does not provide guarantees on the forward nonemptiness of such safety-preserving controls sets except in very specific cases (e.g. when control input constraints are not considered).
Even in the absence of obstacles, it is trivial to find examples where forward invariance of the safe set is impossible in an adversarial setting. Two such examples are given in Figure \ref{fig:infeasible} for single integrator agents in the plane $\R^2$, where adversaries surround a normal agent or pin a normal agent against an obstacle.
Proving the forward nonemptiness of sets $K_i(\cdot)$ and $K_i^{\psi}(\cdot)$, however, is beyond the scope of this paper. 
\begin{figure}
    \centering
    \includegraphics[width=.75\columnwidth]{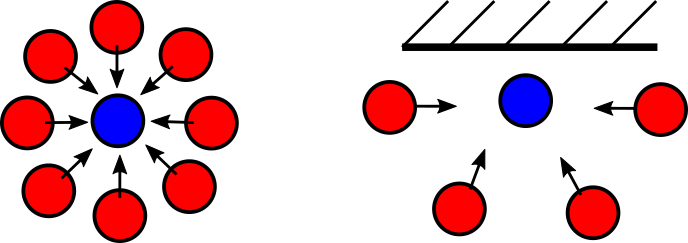}
    \caption{Two examples of initial system states where it is impossible to guarantee forward nonemptiness of the normal agent's safe controls set $K_i(\cdot)$. Agents have single integrator dynamics; the normal agent is depicted in blue and adversarial agents in red. The straight line at the top of the right image denotes an obstacle. Determining initial conditions for which nonemptiness of safe control sets is guaranteed for all forward time remains an open problem when considering nonlinear control-affine systems.}
    \label{fig:infeasible}
\end{figure}

\subsection{Unicycle Agents in $\R^2$}

The first simulation involves a network of $n = 5$ agents with unicycle dynamics in $\R^2$. 
Agents are nominally tasked with tracking time-varying trajectories defined by a Bezier curve, timing law, and local formational offsets. The agents must also avoid static obstacles.
Two agents misbehave by each pursuing the respective closest normal agent. 
The state of each unicycle $i \in \V$ is denoted $x_i = \bmx{x_{i,1}& x_{i,2} & x_{i,3}}^T$. 
Each unicycle is controlled via an input-output linearization method \cite[Ch. 11]{siciliano2010robotics} where each agent has the outputs $p_i = \bmx{p_{i,1} & p_{i,2}}^T$ defined as
\begin{align}
\begin{aligned}
\label{eq:IOoutputs}
    p_{i,1} &= x_{i,1} + b \cos(x_{i,3}), \\
    p_{i,2} &= x_{i,2} + b \sin(x_{i,3}),\ b > 0.
\end{aligned}
\end{align}
The output $p_i$ is treated as having single integrator dynamics $\dot{p}_i  = u_i = \bmx{u_{i,1} & u_{i,2}}^T$.
Each agent $i$ is controlled by first computing the output control input $u_i$ and minimally modifying $u_i$ via the CBF-based QP method described previously. 
The final unicycle control inputs $\bmx{\nu_i & \omega_i}$ are then obtained via the transformation $\bmxs{\nu_i \\ \omega_i} = \bmxs{\cos(\theta_i) & \sin(\theta_i) \\ -sin(\theta_i)/b & \cos(\theta_i)/b} \bmxs{u_{i,1} \\ u_{i,2}}$. At any timestep where the QP is infeasible, each normal agent applies the best-effort safety preserving control \eqref{eq:umin} calculated via the LP \eqref{eq:LPnormal}.
Infeasibility of the QP generating the control inputs does not necessarily imply that safety cannot be maintained.
Reasons why the QP may go infeasible at particular time steps include the conservative nature of the form of $\eta(\cdot)$ and the choice of $\alpha(\cdot)$ function. The LP in \eqref{eq:LPnormal} is applied whenever an agent's QP is infeasible to apply the agent's best control efforts towards maintaining safety.
Given control bounds $|\nu_i| \leq \nu_i^{\max}$ and $|\omega_i| \leq \omega_i^{\max}$, it can be shown that the corresponding linear control bounds on $u_{i,1}, u_{i,2}$ are $A_i(x_i) \bmx{u_{i,1} \\ u_{i,2}} \leq b_i$, with
\begin{align}
\label{eq:IOcontrolbounds}
    \hspace{-0.9em} A_i(x_i) = \bmx{\cos(\theta_i) & \sin(\theta_i) \\ -\cos(\theta_i) & -\sin(\theta_i) \\ -\sin(\theta_i )/ b & \cos(\theta_i )/ b \\ \sin(\theta_i )/ b & -\cos(\theta_i )/ b },\ b_i = \bmx{\nu_i^{\max} \\ \nu_i^{\max} \\ \omega_i^{\max} \\ \omega_i^{\max}}
\end{align}
For strictly positive $\nu_i^{\max}$, $\omega_i^{\max}$, and $b$, the set $\mathcal{U}_i = \{u_i : A_i(x_i) u_i - b_i \leq 0 \}$ satisfies the conditions of Assumption \ref{assume:Ui} for all $x_i \in \R^3$. In this simulation each normal agent has $\nu_i^{\max} = 4$, $\omega_i^{\max} = 2$, $i \in \N$. For purposes of this simulation, each adversarial agent has lower maximum linear and angular velocities than the normal agents with $\nu_j^{\max} = 2$, $\omega_j^{\max} = 1$, $j \in \A$.
The safe set $S$ is defined using a boolean composition of pairwise collision-avoidance sets for normal-to-normal pairs, normal-to-adversarial pairs, and normal-to-obstacle pairs. More specifically, given $i, i' \in \N$ each safe set $h_{i,i'}(\vec{x})$ is defined with respect to the linearized outputs \eqref{eq:IOoutputs} as $h_{i,i'} = (R_c + 2b)^2 - \nrm{p_i - p_{i'}}_2^2$, with partial derivative $\frac{\partial h_{i,i'}}{\partial p_i} = -2(p_{i} - p_{i'})$.
The normal-to-adversarial and normal-to-obstacle pairwise safe sets for $i \in \N$, $j \in \A$ are defined in a similar manner. The pairwise adversarial-to-adversarial and adversarial-to-obstacle safe sets are \emph{not} considered (as per Remark \ref{remark:uncontrollable}), since the nominal control law by definition has no effect on adversarial agents.
All pairwise safe sets are composed into a single CBF $h_{tot}$ via boolean AND operations using the \emph{log-sum-exp} smooth approximation to the $\max(\cdot)$ function:
\begin{align*}
h_{tot}(\vec{x}) = \textup{LSE}(\bmx{h_1,\ldots, h_p}) &= \sigma + \frac{1}{\rho}\ln \pth{\sum_{i=1}^p e^{\rho (h_i - \sigma)}},
\end{align*}
where $\rho \in \R_{>0}$, $\sigma \in \R$.
The term $\sigma$ is chosen to ensure numerical stability.
The term $\rho$ controls how tightly $\LSE(\cdot)$ approximates $\max(\cdot)$.
The reader is referred to \cite{lindemann2018control}, \cite[Eq (10)]{wurts2020collision} for more details.
Sampling times in this simulation are asynchronous; each agent has a nominal sampling time period of $\Gamma = 0.01$ with a time-varying random disturbance satisfying $\delta_i^{\max} = .002$. For each agent $i \in \V$, the disturbance bound satisfies $\phi_i^{\max} = 1.73$, and the term $\eta$ is set as $\eta(\Gamma) = 8.0566$.
Several frames from the simulation are shown in Figure \ref{fig:Sim1}. A plot of $h_{\textup{tot}}$ is given in Figure \ref{fig:htot_value}. As shown by Figure \ref{fig:htot_value}, under the proposed resilient controller the safety bounds for normal agents are not violated for the duration of the simulation. This is achieved despite the actions of the adversarial agents.

For comparison, Figure \ref{fig:htot_value_2} depicts a simulation run under the same parameters but with $\eta(\Gamma) = 0$ $\forall t \geq 0$; i.e. nothing is done by normal agents to counteract effects of sampling, disturbances, and time delays. In this case safety of the normal agents is not preserved---the value of $h_{\text{tot}}$ is temporarily positive, indicating that one or more of the composed safe sets was not invariant for the entire simulation.

\begin{figure*}
    \centering
    \includegraphics[width=\figsize\textwidth]{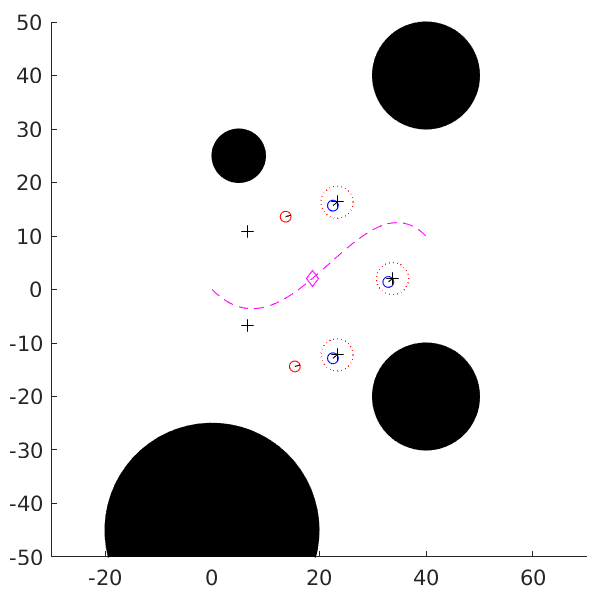} 
    \includegraphics[width=\figsize\textwidth]{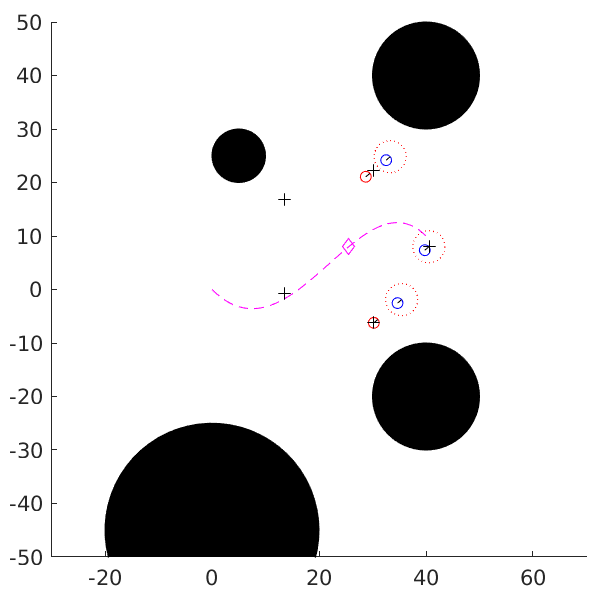} 
    \includegraphics[width=\figsize\textwidth]{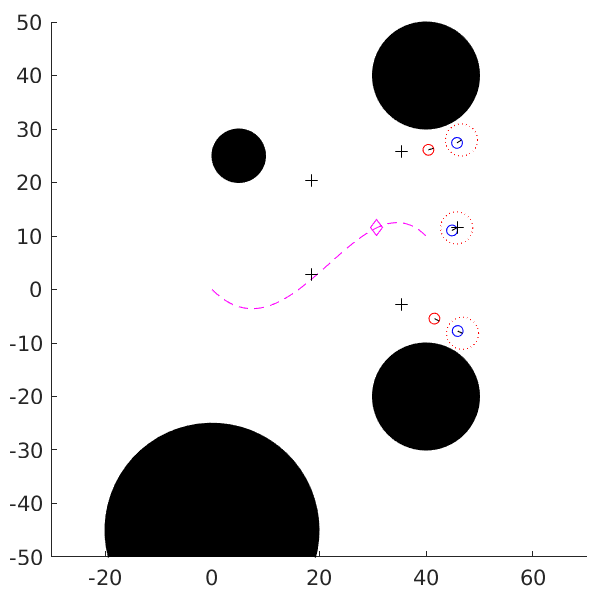} 
    
    
    \caption{Still frames from the video of Simulation 1. Normal agents are represented by blue circles and adversarial agents are represented by red circles. The dotted red lines around the blue circles represent normal agents' safety radii. The time-varying formation trajectory is represented by the dotted magenta line; the magenta diamond represents the center of formation. Black crosses represent agents' nominal local time-varying formational points.}
    \label{fig:Sim1}
\end{figure*}

\begin{figure}
    \centering
    \includegraphics[width=.75\columnwidth]{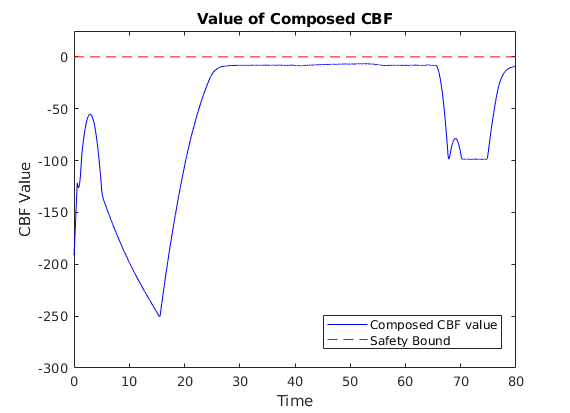}
    \caption{The value of the composed function $h_{tot}$ representing the safe set $S$. Non-positive values represent safety of the normal agents.}
    \label{fig:htot_value}
\end{figure}

\begin{figure}
    \centering
    \includegraphics[width=.75\columnwidth]{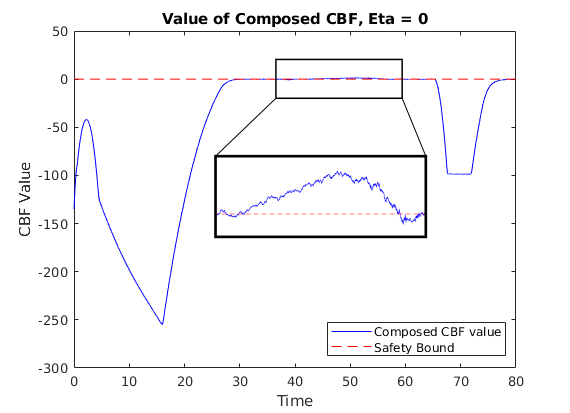}
    \caption{The value of the composed function $h_{tot}$ representing the safe set $S$ when $\eta(\Gamma) = 0$ for all normal agents; i.e. sampling times and disturbances are not accounted for in the control input calculations. The safety bound for the normal agents is violated.}
    \label{fig:htot_value_2}
\end{figure}

\begin{figure}
    \centering
    \includegraphics[width=.75\columnwidth]{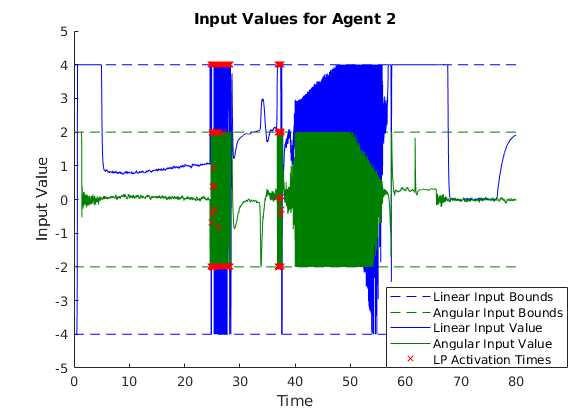}
    \caption{Input values for (normal) agent 2. The blue solid line represents linear input value and the green solid line represents angular input value. Dotted lines represent input bounds. Times at which the worst-case LP is used are marked with red X's on both the linear and angular input lines.}
    \label{fig:my_label}
\end{figure}

\subsection{Double Integrators in $\R^3$}

The second simulation involves a network of $n = 8$ double integrator agents in $\R^3$. 
Four of the agents behave normally and four are adversarial.
Similar to the prior simulation,
agents are nominally tasked with tracking positions in a time-varying formation defined by a Bezier curve, timing law, and local formational offsets. Each agent $i \in \V$ has the state $\vec{x}_i = \bmx{x_{i,1} & x_{i,2} & x_{i,3} & v_{i,1} & v_{i,2}& v_{i,3}}^T$ with the following dynamics:

\begin{align*}
    \dot{\vec{x}}_i &= \underbrace{\bmx{\bm 0_{3 \times 3} & I_{3\times 3} \\ \bm 0_{3 \times 3} & -\beta_i I_{3 \times 3}}}_{A} \vec{x}_i + \underbrace{\bmx{\bm 0_{3 \times 3} \\ I_{3 \times 3}}}_{B} \bmx{u_{i,1} \\ u_{i,2} \\ u_{i,3}} + \phi_i(t).
\end{align*}
Each normal agent $i \in \N$ has an input bound $\nrm{u_i}_{\infty} \leq 2$. Each adversarial agent $j \in \A$ has an input bound $\nrm{u_j}_\infty \leq 1.5$. 
The terms $\beta_i \in \R_{\geq 0}$ are chosen such that each normal agent has a velocity bound $\nrm{\bmx{v_{i,1} & v_{i,2} & v_{i,3}}} \leq 3$ $\forall i \in \N$ and each adversarial agent has $\nrm{\bmx{v_{j,1} & v_{j,2} & v_{j,3}}} \leq 2.25$ $\forall j \in \A$. Specifically, $\beta_i =  3$ $\forall i \in \V$.

Each normal agent $i \in \N$ seeks to track a time-varying formational state $\vec{x}^d_i \in \R^3$. 
The nominal formation states for all agents are equidistantly distributed around the edge of a circle of radius 30 whose center translates along a time-varying trajectory described by a 3rd order Bezier curve $B(t) = \sum_{k=0}^3 \vec{\beta}_i b_{i,3}(s(t))$ described by the timing law $s(t) = \frac{t_f - t}{t_f - t_0}$ for $t_f = 140$ and $t_0 = 0$, Bernstein basis polynomials $b_{i,3}(s)$, and the vector coefficients
\begin{align*}
    \vec{\beta}_0 = \bmx{0 \\0\\0}\ \vec{\beta}_1 = \bmx{-25\\25\\30}\ \vec{\beta}_2 = \bmx{125\\75\\-30}\ \vec{\beta}_3 = \bmx{100\\100\\0}.
\end{align*}
Letting the error $\vec{e}_i$ be defined as $\vec{e}_i = \vec{x}^d_i - \vec{x}_i$,
each $i \in \N$ calculates the nominal control law $\vec{u}_{i,\text{nom}} = -K\vec{e}_i - \ddot{\vec{x}}^d_i$ with $K = \bmxs{k_1 I_{3\times 3} & k_2 I_{3 \times 3}}$, where $\ddot{\vec{x}}^d_i$ is the acceleration of $\vec{x}^d_i$, $k_1 = 2$, and $k_2 = 2 \sqrt{k_1}$. 
The nominal input $\vec{u}_{i,\text{nom}}$ is minimally modified via the higher-order CBF-based QP method described in \ref{sec:highdegree}. 
Similar to \eqref{eq:LPnormal},
at any timestep $t_i^k$ where the QP is infeasible each normal agent $i \in \N$ applies the control action
\begin{align*}
    u_i^{\min}(\vec{x}^{k_i}) = \arg\min_{u_i \in \mathcal{U}_i} \bkt{L_{f_i}\psi_{q-1}^{x_i}(\vec{x}^{k_i}) + L_{g_i}\psi_{q-1}^{x_i}(\vec{x}^{k_i}) u_i}.
\end{align*}

The environment contains 10 spherical obstacles with radius 2 randomly distributed across the volume containing the second half of the time-varying trajectory.
Adversarial agents $j \in \A$ in this simulation are each assigned a target agent to pursue, with one of the normal agents having multiple pursuers. Each adversarial agent $j \in \A$ is assumed to have full knowledge of its target's current state, but does not have knowledge of its target's control inputs. 
Defining the error term $\vec{e}_{i,j}  = \vec{x}_i - \vec{x}_j$, $i,j \in \V$,
each adversary $j \in \A$ applies the control law 
$\vec{u}_j = -K\vec{e}_{i,j}$, 
where the matrix $K$ is defined as previously described but with $k_1 = 1$.
This control input is minimally modified using a CBF QP method to respect control input constraints and avoid collisions with other adversaries and obstacles, but not with normal agents.

The safe set $S$ in this simulation is defined using a similar boolean composition of pairwise collision avoidance sets as in the previous simulation. At each sampling instance, the normal agent $i$ considers all other agents whose positions lie within a neighborhood of radius $35$ from agent $i$'s position $\bmxs{x_{i,1} & x_{i,2} & x_{i,3}}$. All normal-to-normal, normal-to-adversarial, and normal-to-obstacle pairwise safe sets are composed into a single function $h_{tot}$ via boolean AND operations using the \emph{log-sum-exp} function.
Sampling times in this simulation are asynchronous for normal agents; each $i \in \N$ has a nominal sampling time period of $\Gamma = 0.07$ with $\delta_i^{\max} = 0.03$ for each normal agent.
The disturbance $\phi_i(t)$ for each agent $i \in \V$ (normal and adversarial) satisfies $\phi_i^{\max} = .4899$. For each normal agent $i \in \N$ the term $\eta$ satisfies $\eta(\Gamma_i + \delta^{\max}) = 5$, and the term $\xi$ satisfies $\xi = 39.19$.
Still frames from the simulation are shown in Figure \ref{fig:Sim2}, and a plot of the value of $h_{tot}$ is given in Figure \ref{fig:htot3D}. As shown by Figure \ref{fig:htot3D}, the safety bounds for the normal agents are not violated for the duration of the simulation despite the actions of the adversaries.

\begin{figure*}
    \centering
    \includegraphics[width=\figsize\textwidth]{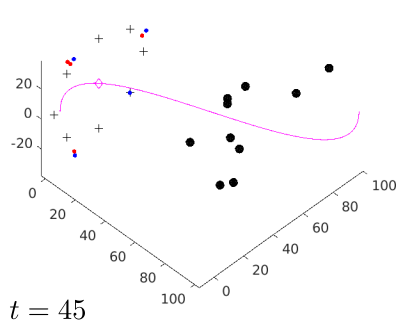} 
    \includegraphics[width=\figsize\textwidth]{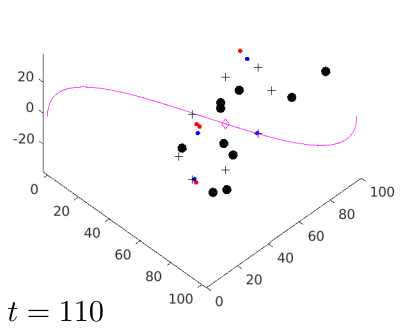}
    \includegraphics[width=\figsize\textwidth]{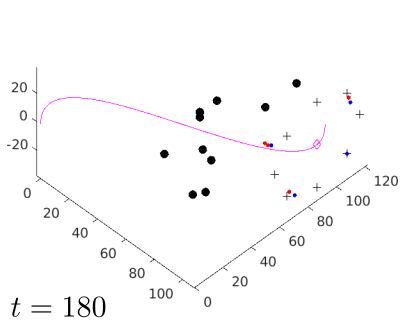} 
    
    
    \caption{Still frames from Simulation 2. Normal agents are represented by blue circles and adversarial agents are represented by red circles. For clarity, the safety radii of the normal agents has been omitted. The time-varying formation trajectory is represented by the dotted magenta line; the magenta diamond represents the center of formation. Black crosses represent individual agents' nominal local time-varying formational points. Black spheres represent randomly placed obstacles.}
    \label{fig:Sim2}
\end{figure*}

\begin{figure}
    \centering
    \includegraphics[width=0.75\columnwidth]{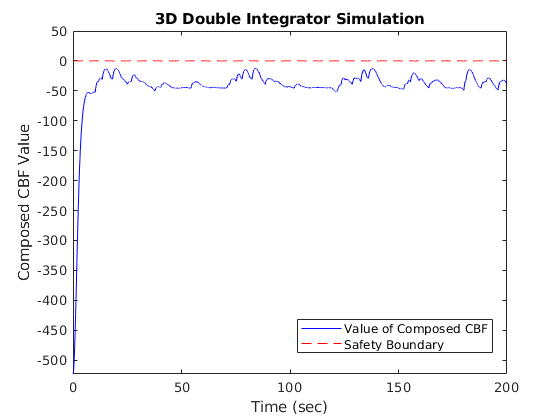}
    \caption{A plot of the value of the composed function $h_{tot}$ representing the safe set $S$ for all normal agents in the second simulation. Non-positive values represent safety of the normal agents. For the entire duration of this simulation, the value of $h_{tot}$ remains strictly negative, indicating that safety is maintained for all normal agents.}
    \label{fig:htot3D}
\end{figure}

\begin{figure}
    \centering
    \includegraphics[width=0.75\columnwidth]{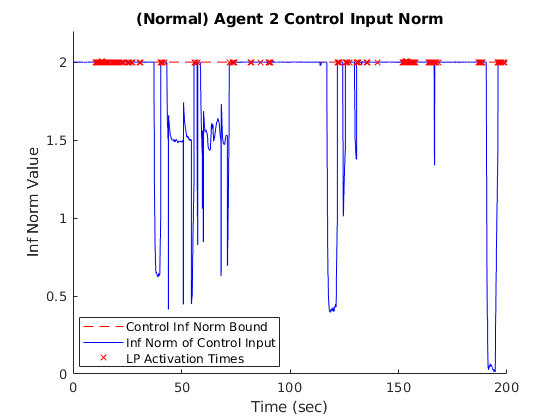}
    \caption{A plot of the infinity norm of control input for (normal) agent 2 versus time for the second simulation. The control norm bound is plotted in red, and the norm of agent 2's control input is plotted in blue. Times when the backup LP is used are marked with red X's.}
    \label{fig:my_label}
\end{figure}

\section{Conclusion}
\label{sec:conclusion}

In this paper, we presented a framework for normally-behaving agents to render a safe set forward invariant in the presence of adversarial agents. The proposed method considers distributed sampled-data systems with heterogeneous, asynchronous control affine dynamics, and a class of functions defining safe sets with high relative degree with respect to system dynamics.
Directions for future work include investigating 
cases where control inputs of heterogeneous agents do not appear simultaneously in higher derivatives of the functions describing safe sets.



\bibliographystyle{IEEEtran}
\bibliography{bibliography.bib}

\end{document}